\documentclass[12pt]{amsart}
\usepackage{amssymb}
\usepackage[centertags]{amsmath}
\usepackage{amsfonts}
\usepackage{amsthm}
\linespread{1.18}
\usepackage{enumerate} 
\usepackage{xcolor}
\usepackage[margin=1in]{geometry}
\usepackage{ulem}
\usepackage{centernot}
\usepackage{csquotes}

\newtheorem{theorem}{Theorem}[section]
\newtheorem*{theorem*}{Theorem}

\newtheorem{corollary}[theorem]{Corollary}
\newtheorem{lemma}[theorem]{Lemma}
\newtheorem{remark}[theorem]{Remark}

\newtheorem{proposition}[theorem]{Proposition}

\theoremstyle{definition}


\newcommand{\nn}{\mathbb{N}}





\newcommand{\supp}{\mathrm{supp}}

\newcommand{\sspan}{\mathrm{span}}

\newcommand{\Isom}{\text{Isom}}
\newcommand{\Id}{\text{Id}}
\newcommand{\sign}{\text{sign}}

\begin{document}

\title{On the geometry of higher order Schreier spaces}

\author{Leandro Antunes}
\author{Kevin Beanland}
\author{H\`ung Vi\d{\^e}t Chu}

\address{Departamento de Matem\'atica, Universidade Tecnol\'ogica Federal do Paran\'a, Campus Toledo \\
 85902-490 Toledo, PR \\ Brazil}
\email{leandroantunes@utfpr.edu.br}

\address{Department of Mathematics, Washington and Lee University, Lexington, VA 24450.}
\email{beanlandk@wlu.edu}
\email{chuh19@mail.wlu.edu}

\thanks{H.V. Chu is an undergraduate student at Washington \& Lee University. Some of the results of this paper are part of the summer research work done under the supervision of the second author. Beanland and Chu acknowledge the support of Washington \& Lee's Lenfest Summer Research Scholars program.}

\thanks{Leandro Antunes was financially supported by Coordena\c{c}\~ao de Aperfei\c{c}oamento de Pessoal de N\'ivel Superior - Brasil (CAPES) (Process PDSE-88881.189744/2018-01, Finance Code 001) and by UTFPR (Process 23064.004102/2015-40).}

\thanks{2010 \textit{Mathematics Subject Classification}. Primary: 46B03 }
\thanks{\textit{Key words}: extreme points, $\lambda$-property, polyhedral Banach space, isometry group, Schreier's space}


\begin{abstract}
For each countable ordinal $\alpha$ let $\mathcal{S}_{\alpha}$ be the Schreier set of order $\alpha$ and $X_{\mathcal{S}_\alpha}$ be the corresponding Schreier space of order $\alpha$. In this paper we prove several new properties of these spaces. 
\begin{enumerate}
    \item If $\alpha$ is non-zero then $X_{\mathcal{S}_\alpha}$ possesses the $\lambda$-property of R. Aron and R. Lohman and is a $(V)$-polyhedral spaces in the sense on V. Fonf and L. Vesely. 
    \item If $\alpha$ is non-zero and $1<p<\infty$ then the $p$-convexification $X^{p}_{\mathcal{S}_\alpha}$ possesses the uniform $\lambda$-property of R. Aron and R. Lohman. 
    \item For each countable ordinal $\alpha$ the space $X^*_{\mathcal{S}_\alpha}$ has the $\lambda$-property.
    \item For $n\in \mathbb{N}$, if $U:X_{\mathcal{S}_n}\to X_{\mathcal{S}_n}$ is an onto linear isometry then $Ue_i = \pm e_i$ for each $i \in \mathbb{N}$. Consequently, these spaces are light in the sense of Megrelishvili. 
\end{enumerate}
The fact that for non-zero $\alpha$, $X_{\mathcal{S}_\alpha}$ is $(V)$-polyhedral and has the $\lambda$-property implies that each $X_{\mathcal{S}_\alpha}$ is an example of space solving a problem of J. Lindenstrauss from 1966. The first example of such a space was given by C. De Bernardi in 2017 using a renorming of $c_0$. 
\end{abstract}

\maketitle
\section{Introduction}

The objective of this paper is to investigate several geometric properties of  higher order Schreier spaces, namely extreme points, $\lambda$-property, polyhedrality and isometries.

\subsection{Combinatorial Banach Spaces}
In \cite{Go-blog}, W.T. Gowers defines the combinatorial Banach space $X_\mathcal{F}$ as the completion of the vector space $c_{00}$ (finitely supported real scalar sequences)   with respect to the norm
$$\|x\|_{X_\mathcal{F}}=\sup\{\sum_{i\in F}|x(i)|: F\in\mathcal{F}\}, \quad x \in c_{00},$$
defined by a regular (i.e. compact, spreading and hereditary) family of finite subsets $\mathcal{F}$ of $\nn$ containing the singletons. 
A famous example of a regular family is $\mathcal{S}_1=\{F \subset \mathbb{N} :|F|\le \min F\}$ (here $|F|$ is the cardinality of $F$), and the combinatorial Banach space $X_{\mathcal{S}_1}$ is Schreier's space.

In this paper, we focus mainly on the combinatorial Banach spaces defined using the transfinite Schreier sets $(\mathcal{S}_\alpha)_{\alpha <\omega_1}$ (defined in \cite{AlA-Dissertationes}) as well as their $p$-convexifications.

\subsection{Extreme points}
Let $C$ be a nonempty closed convex subset of a Banach space $X$. We say that $x_0 \in C$ is an extreme point of $C$ if $x_0$ does not lie in the interior of any closed line segment contained in $C$. We denote by $E(C)$ the set of extreme points of $C$ and, for notational simplicity, we denote by $E(X)$ the set of extreme points of the unit ball of $X$, $Ba(X)$. For example, it is not hard to see that $E(c_0) = \emptyset$, which in particular implies that $c_0$ is not isometrically isomorphic to the dual of any Banach space.

In \cite{BDHQ-preprint} the second author of the current paper together with N. Duncan, M. Holt and J. Quigley proved several results for  combinatorial Banach spaces and, in particular, showed that the set of extreme points of the unit ball of $X_\mathcal{F}$ is at most countable for every regular family $\mathcal{F}$. In section \ref{section-extreme}, we build on this work.

For $p \in (1,\infty)$ we give, in section \ref{section-extreme}, a characterization of the extreme points of the $p$-convexification, $X_{\mathcal{S}_\alpha}^p$, (Theorem \ref{only reflexive}). Besides their own interest, the results of that section will be used several times in the remaining of this paper, namely in the proofs of Theorem \ref{lambda}, Theorem \ref{th53} and Lemma \ref{isometry items}.

\subsection{$\lambda$-property} 
In \cite{AronLoh-Pacific}, R. Aron and R. Lohman introduced geometric properties for Banach spaces, called the $\lambda$-property and uniform $\lambda$-property.  A space $X$ is said to have the $\lambda$-property if for all $x\in Ba(X)$, there exists $0<\lambda\leqslant 1$ such that $x=\lambda e+(1-\lambda)y$ for some $e \in E(X)$, $y \in    Ba(X)$. A space $X$ is said to have the uniform $\lambda$-property if there exists $\lambda_0 > 0$ such that for every $x \in Ba(X)$, $\lambda_0 \leqslant \sup \{\lambda > 0; \exists \,e \in E(X), y \in Ba(X); x = \lambda e + (1-\lambda)y\}.$

These properties have been extensively studied by many authors over the past 25 years (e.g. \cite{AronLohSu-PAMS,Authors-Lambda1,Authors-Lambda2,Authors-Lambda5,Lin-Lambda,Authors-Lambda4}). In 1989 \cite{ShTr-Glasgow}, Th. Shura and D. Trautman proved that the Schreier space has the $\lambda-$property and the set of extreme points is countably infinite.  In Section \ref{section-lambda} we prove the following:
\begin{theorem*}
Let $\alpha$ be a countable and non-zero ordinal.
\begin{enumerate}
    \item For $p \in  (1,\infty)$, the space $X^p_{\mathcal{S}_\alpha}$ has the uniform $\lambda$-property.
    \item The space  $X_{\mathcal{S}_\alpha}$ has the $\lambda$-property.
\end{enumerate} 
\label{lambda1}
\end{theorem*}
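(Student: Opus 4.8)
The plan is to derive both statements from one construction — an extreme point ``aligned'' with the given vector — fed through the description of extreme points from Section~\ref{section-extreme}. First I record a normalization: since every vector of $X$ (where $X$ is $X_{\mathcal{S}_\alpha}$, with $p=1$ there, or $X^p_{\mathcal{S}_\alpha}$) lies in $c_0$ and $\mathcal{S}_\alpha$ is compact, the supremum defining $\|x\|$ is attained on some finite $F\in\mathcal{S}_\alpha$; call such an $F$ \emph{norming for $x$}. Enlarging a norming set inside $\mathcal{S}_\alpha$ to a maximal member does not change the relevant sum (an admissible set cannot overshoot $\|x\|$), so I fix a maximal norming set $F$. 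The case $x=0$ is trivial in both parts ($0=\tfrac12 e+\tfrac12(-e)$ for any extreme $e$).

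For part~(1): fix $x$ with $0<\|x\|_p=r\le1$ and a maximal norming set $F$. The crucial step — and, I expect, the hardest — is to produce an extreme point $e$ of $X^p_{\mathcal{S}_\alpha}$ with $e(i)=x(i)/r$ for all $i\in F$, and with $e$ agreeing in sign with $x$ on the rest of its support. This is exactly where Theorem~\ref{only reflexive} is needed: the unit vector $x|_F/r$ supported on the maximal admissible set $F$ must be completed to an extreme point by attaching a controlled ``tail'' that blocks all remaining coordinates while keeping the norm equal to $1$, and the coordinates of $\supp x$ lying outside $F$ have to be absorbed as well; making this precise is the real obstacle and will plausibly require the exact form of the extreme-point characterization, perhaps via a transfinite induction on $\alpha$. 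Granting such an $e$, I set $y=(x-\lambda e)/(1-\lambda)$. For $G\in\mathcal{S}_\alpha$, writing $a=\sum_{i\in G\cap F}|x(i)/r|^p$ and $b=\sum_{i\in G\setminus F}|e(i)|^p$, one has $a+b=\sum_{i\in G}|e(i)|^p\le\|e\|_p^p=1$, and (assuming for simplicity $\supp x\subseteq F$, the general case being the delicate point above)
\[
\sum_{i\in G}|(x-\lambda e)(i)|^p=|r-\lambda|^p\,a+\lambda^p\,b\le\max\{|r-\lambda|^p,\,\lambda^p\}\,(a+b)\le(1-\lambda)^p
\]
whenever $\lambda\le\tfrac12$ (then $|r-\lambda|\le1-\lambda$ and $\lambda\le1-\lambda$). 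Thus $\|y\|_p\le1$ and $x=\tfrac12 e+\tfrac12 y$, giving the uniform $\lambda$-property with constant $\tfrac12$ (absorbing the outside-$F$ coordinates may only shrink the constant within a bound depending on $p$ alone).

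For part~(2): $X=X_{\mathcal{S}_\alpha}$ has only the extreme points $\pm e_1\pm e_j$ ($j\ge2$), so an extreme point can touch only two coordinates. Given $x$ with $0<\|x\|\le1$, I would pick $j\in\supp x$ such that every norming set of $x$ meets $\{1,j\}$ and set $e=\sgn(x(1))\,e_1+\sgn(x(j))\,e_j$ (with $\sgn 0:=1$). For $\lambda>0$ small enough — below $|x(j)|$, below $\tfrac12$, and below $1-\sup\{\sum_{i\in G}|x(i)|:G\in\mathcal{S}_\alpha,\ G\cap\{1,j\}=\emptyset\}$, the last supremum being strictly below $\|x\|$ because by the choice of $j$ no norming set avoids $\{1,j\}$ — one checks $\|x-\lambda e\|\le1-\lambda$ by the cases $j\in G$ (so $1\notin G$), $G=\{1\}$, and $1,j\notin G$; this yields $x=\lambda e+(1-\lambda)y$ with $y\in Ba(X)$, i.e.\ the $\lambda$-property.

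The heart of part~(2) is the combinatorial claim that such a $j$ exists: the norming sets of a norm-one vector admit a transversal of size at most two, one point of which may be taken to be the first coordinate. I would prove this by transfinite induction through the inductive definition of $\mathcal{S}_{\alpha+1}$ from $\mathcal{S}_\alpha$ and the limit clause, showing that two disjoint norming sets both missing the coordinate $1$ could always be ``bridged'' by an admissible set carrying mass strictly exceeding $\|x\|$ — impossible — so that all norming sets pass through a common coordinate, up to the exceptional possibility that one of them is the singleton $\{1\}$ while the rest share a common $j\ge2$. I expect this lemma, rather than the case analyses above, to be the main difficulty in part~(2).
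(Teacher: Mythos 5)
Your proposal has genuine gaps in both parts. In part (1), the entire argument hinges on producing an extreme point $e$ of $X^p_{\mathcal{S}_\alpha}$ with $e(i)=x(i)/r$ on a maximal norming set $F$ and controlled behaviour elsewhere, and you explicitly leave this construction unproved (``the real obstacle''); moreover your norm estimate for $x-\lambda e$ is only carried out under the simplifying assumption $\supp x\subseteq F$, while for a general $x$ (in particular one with infinite support, or with mass interlaced inside $F$) the coordinates of $x$ outside $F$ must be both absorbed into the estimate and covered by $1$-sets of $e$ as required by Theorem \ref{only reflexive}. So the central step is missing. The paper avoids prescribing $e$ on a norming set altogether: it proves Lemma \ref{the big one}, three successive midpoint decompositions $x=\tfrac12(x_1+x_2)$ producing a finitely supported half, then a half with a non-maximal $1$-set, then a half whose support is fully covered by sets of $\mathcal{A}$; Theorem \ref{only reflexive} then certifies that last half is extreme, giving the uniform constant $\tfrac18$.

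Part (2) rests on two assertions that are false. First, $E(X_{\mathcal{S}_\alpha})$ is not $\{\pm e_1\pm e_j: j\geqslant 2\}$, even for $\alpha=1$: the vector $v=e_1+\tfrac12(e_2+e_3+e_4)$ lies in $E(X_{\mathcal{S}_1})$, since the sets $\{1\},\{2,3\},\{3,4\},\{2,4\}$ pin the first four coordinates of any midpoint decomposition and the sets $\{3,4,j\}$, $j\geqslant 5$, force all later coordinates to vanish. Second, the key combinatorial lemma you plan to prove by transfinite induction --- that all norming sets of a norm-one vector meet $\{1,j\}$ for some fixed $j$ --- is false: $x=\tfrac12(e_2+e_3+e_4)$ has norm one in $X_{\mathcal{S}_1}$ with norming sets $\{2,3\},\{3,4\},\{2,4\}$, which admit no such transversal. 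Worse, this $x$ cannot be written as $\lambda(\pm e_1\pm e_j)+(1-\lambda)y$ with $\lambda>0$ and $\|y\|\leqslant 1$ at all: whatever $j$ is chosen, one of the three norming sets avoids $\{1,j\}$ and forces $\|y\|\geqslant \tfrac{1}{1-\lambda}>1$. So your strategy for part (2) cannot be repaired within its own framework; extreme points with spread-out rational coordinates (such as $v$ above) are unavoidable, which is exactly why the paper handles $p=1$ differently, via Carath\'eodory's theorem in the finite-dimensional span of the decomposed vector and an argument using a non-maximal $1$-set to upgrade finite-dimensional extreme points to extreme points of the whole space.
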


We also give a characterization for the extreme points of $X_{\mathcal{S}_\alpha}^*$ for countable $\alpha$ (Proposition \ref{prop54}).

\subsection{Polyhedrality}

A Banach space $X$ is called polyhedral in \cite{Klee-Acta} if the unit ball of every finite dimensional subspace of $X$ is a polytope (i.e. has finitely many extreme points). Some important examples of polyhedral spaces are $c_0$ and $C(K)$ spaces for $K$ a countable, compact, Hausdorff space. V. Fonf \cite{Fonf-polyhedral} showed that a polyhedral space must be $c_0$-saturated (that is, every infinite dimensional subspace has a further subspace isomorphic to $c_0$). In addition, for each countable $\alpha<\omega_1$ the space $X_{\mathcal{S}_\alpha}$ embeds isometrically in a $C(K)$ for an appropriately chosen countable compact Hausdorff space $K$ (see, for example, \cite{CastGon-Extracta,Rosenthal-Handbook}).  Therefore each  $X_{\mathcal{S_\alpha}}$ is a polyhedral Banach space.

In a recent paper \cite{DeB-Israel}, C. De Bernardi presents a space $X$ that is a renorming of $c_0$ and that is both polyhedral and has the property that $Ba(X)$ is the closed convex hull of its extreme points. The existence of a space with these properties solved a problem of J. Lindenstrauss from 1966 \cite{Lin-Klee}. 

De Bernardi also observes that his space has the following property called $(V)$-polyhedral which is stronger than being polyhedral.  A Banach space $X$ is called a $(V)$-polyhedral space (Fonf and Vesel\'y  in \cite{FonfVes-poly}) if 
$$\sup\{f(x): f \in E(X^*) \setminus D(x)\}<1$$
for all $x \in S(X)$ (the unit sphere of $X$) where $D(x)=\{g \in S(X^*): g(x)=1\}$. This was the fifth definition concerning polyhedrality in their paper, hence the notation $(V)$-polyhedral.

We will prove in Theorem \ref{th53} that for each countable $\alpha$, $X_{\mathcal{S}_\alpha}$ is a $(V)$-polyhedral space. Moreover, $Ba(X_{\mathcal{S}_\alpha})$ is the closed convex hull of its extreme points, i.e., $X_{\mathcal{S}_\alpha}$ are new solutions for Lindenstrauss' problem.

\subsection{Isometry group of $X_{\mathcal{S}_n}$}

Given a Banach space $X$, we denote by $\Isom(X)$ the group formed by all surjective linear isometries of $X$. The characterization of the isometries play a central role in the field of geometry of Banach spaces and can be found already in the famous Banach's treatise of 1932 \cite{Ba-book}, in which he gives the general form of isometries of classical spaces, such as $c$, $c_0$, $C(K)$, $\ell_p$ and $L_p, 1 \leq p < \infty$. Characterizations for other spaces can be found in \cite{fleming-isometries}.

In the final section of the paper, Section \ref{section-isometry}, we characterize $ \Isom(X_{\mathcal{S}_n})$ for each $n \in \mathbb{N}$.

As an application of this characterization, we classify the groups $\Isom(X_{\mathcal{S}_n})$, $n \in \mathbb{N}$ in terms of being light. In \cite{megrelishvili-light}, Megrelishvili defines the concept of light group of isomorphisms of a Banach space $X$ as follows: a group $G \leqslant GL(X)$  light if the Weak Operator Topology {(WOT)} and the Strong Operator Topology {(SOT)} coincide on $G$. He proves that every bounded group of isomorphisms of a Banach space with the Point of Continuity Property (PCP) (e.g., spaces with the Radon-Nikodym Property, including reflexive spaces, and separable dual spaces) is light. In \cite{AFGR-light}, the authors classify in terms of being light the isometry groups of several classical Banach spaces without PCP, such as $c_0, c, \ell_1, \ell_\infty, L_1[0,1]$ and $C(K)$, where $K$ is a infinite compact connected space. They also prove that if $X$ admits a locally uniformly convex renorming invariant under the action of a group $G \leqslant GL(X)$, then $G$ is light.

We prove in Proposition \ref{light} that $\Isom(X_{\mathcal{S}_n})$ is light, for every $n \in \mathbb{N}$. This provides new examples of light groups of a Banach space without PCP.

\subsection*{Acknowledgments} The authors would like to thank Ryan Causey for providing the proof of Proposition \ref{inf convex}. Our original proof was more complicated.



\section{Extreme points of higher order Schreier spaces}
\label{section-extreme}



Let $\mathcal{A}_n$ denote the set of finite subsets of $\mathbb{N}$ with cardinality less than $n$. The higher order Schreier families are defined  in \cite{AlA-Dissertationes} as follows. Letting $\mathcal{S}_0=\mathcal{A}_1$ and supposing that $\mathcal{S}_\alpha$ has been defined for some ordinal $\alpha<\omega_1$, we define
$$\mathcal{S}_{\alpha+1}=\{\cup^{n}_{i=1} E_i: \{\min E_i\}_{i=1}^n \in \mathcal{S}_1 \mbox{ and }E_i \in  \mathcal{S}_\alpha\}\cup \{\emptyset\}.$$
If $\alpha$ is a limit ordinal then we fix $\alpha_n\nearrow\alpha$ and define $\mathcal{S}_\alpha=\{\emptyset\} \cup\{F:\exists n \leqslant \min F,\,F\in\mathcal{S}_{\alpha_{n}}\}.$ We may assume (see for example \cite{Ca-Studia}), that for each $n \in \mathbb{N}$ we have $\mathcal{S}_{\alpha_{n}}\subset \mathcal{S}_{\alpha_{n+1}}$. For each $\alpha<\omega_1$ the set $\mathcal{S}_\alpha$ is a regular family. A set $F \in \mathcal{S}_\alpha$ is non-maximal if and only if for every $l > \max F$, $F \cup \{l\} \in  \mathcal{S}_\alpha$. We denote by $\mathcal{S}^{MAX}_\alpha$ the maximal $\mathcal{S}_\alpha$ sets. Many properties of the collection $(\mathcal{S}_\alpha)_{\alpha <\omega_1}$ can be found in \cite{ATol-Memoirs}. We will use the following general remarks concerning Schreier families of finite order. A good reference for properties of finite order Schreier families is \cite[Lemma 3.8]{GasLeung-Studia}. Recall that if $F,G$ are finite subsets of $\mathbb{N}$ then we say that $F=\{k_1, \ldots, k_n\}$ is a spread of $G=\{\ell_1,\ldots, \ell_m\}$ (written in increasing order) if $m=n$ and $\ell_i \leqslant k_i$ for each $1 \leqslant i \leqslant n$. In addition, we write $E < F $ if $\max E <\min F$.





\begin{remark}
Let $n \in \mathbb{N}$. We mention two facts about a maximal set in $\mathcal{S}^{MAX}_n$.
\begin{enumerate}
\item A set $E \in \mathcal{S}_n^{MAX}$ if and only if for each $m,k$ with $m+k=n$ there is a unique sequence $(E_i)_{i=1}^d$ so that $E=\cup_{i=1}^d E_i$ with $(\min E_i)_{i=1}^d \in \mathcal{S}_m^{MAX}$, $E_1<E_2<\ldots E_d$ are in $\mathcal{S}_k^{MAX}$. 
\item Let $n\in\mathbb{N}$ with $m+k = n$. If a set $G\in \mathcal{S}^{MAX}_n$ is written as $\cup_{i=0}^d G_i$, where $G_0 < G_1 < \ldots < G_d \in \mathcal{S}^{MAX}_{m}$, then $(\min G_i)_{i=0}^d\in \mathcal{S}^{MAX}_k$.
\end{enumerate}
\label{split em up}
\end{remark}

\begin{remark}
Suppose that $G \in \mathcal{S}_n^{MAX}$ and $F\subset \mathbb{N}$ with $\min G < \min F$, $F$ a spread of $G$ with $|F|=|G|$. Then if $j>\min G$, $\{j\} \cup F \in \mathcal{S}_n$. \label{squeeze in}
\end{remark}

\begin{proof}
By Lemma \ref{split em up}, we can write $G=\cup_{i=1}^d G_i$ so that $G_1< \cdots <G_d$ in $\mathcal{S}^{MAX}_{n-1}$, $(\min G_i)_{i=1}^d \in \mathcal{S}^{MAX}_1$, and $d=\min G_1$. Since $|F|=|G|$ and $F$ is a spread of $G$ there is a corresponding decomposition $F=\cup_{i=1}^d F_i$ where $F_i$ is a spread of $G_i$. Let $j >\min G$. Then $$\{\{j\},F_1,\ldots, F_d\}$$
is a collection of $d+1$-many $\mathcal{S}_{n-1}$ sets and the overall minimum is greater than or equal to $d+1$. Therefore $\{j\} \cup F \in \mathcal{S}_n$, as desired.
\end{proof}

Let $(e_i)_{i=1}^\infty$ and $(e_i^*)_{i=1}^\infty$ both denote the standard unit vector basis of $c_{00}$. The sequence $(e_i)_{i=1}^\infty$ is a $1$-unconditional Schauder basis for each of the following spaces. For each regular family $\mathcal{F}$ and $p\in (1, \infty)$, we denote the $p$-convexification of $X_\mathcal{F}$ by $X^{p}_\mathcal{F}$ (and for notation convenience $X_\mathcal{F}=X_\mathcal{F}^1$). The space $X^{p}_\mathcal{F}$ is the completion of $c_{00}$ with respect to the following norm:  $$\|x\|_{X^{p}_\mathcal{F}}=\sup_{F\in\mathcal{F}}(\sum_{i\in F}|x(i)|^p)^\frac{1}{p}.$$

We call $F \in \mathcal{F}$ a 1-set for $x \in S(X_{\mathcal{F}}^p)$ if  $(\sum_{i\in F}|x(i)|^p)^\frac{1}{p}=1$ and $x(i)\not=0$ for any $i \in \mathcal{F}$. Let $\mathcal{F}^1_x$ be the set of all 1-sets of $x$.  Let $\mathcal{A}_x =\{F \in \mathcal{F}: \sum_{i \in F}|x(i)|^p=1\}$. Note that if $F \in \mathcal{A}_x \setminus \mathcal{F}_x^1$ then there is a $G \subset F$ in $\mathcal{F}_x^1$ so that for $i \in F \setminus G$ , $x(i)=0$. Note that $x$ has only maximal $1$-sets if and only if $\mathcal{A}_x= \mathcal{F}_x^1$. 

In the next proposition, we prove that the set $\mathcal{F}_x^1$ is finite, for $x \in S(X^p_{\mathcal{S}_\alpha})$ and $0< \alpha < \omega_1$, and every extreme point of $X_{\mathcal{S}_\alpha}^p$ has finite support, for $0<\alpha < \omega_1$ and $1 \leqslant p < \infty$. This proposition will be used several times in this paper.

\begin{proposition}
\label{lots of items}
Let $\mathcal{F} \in \{ \mathcal{S}_\alpha : 0<\alpha < \omega_1\}$, $p \in [1,\infty)$ and $x \in S(X^p_{\mathcal{F}})$.  Then the following hold:
\begin{enumerate}
    \item The set $\mathcal{F}_x^1$ is finite.
    \item There is an $\varepsilon_x>0$ (which we call the $\varepsilon$-gap for $x$) so that each $F \in \mathcal{F} \setminus \mathcal{A}_x$, $\sum_{i \in F} |x(i)|^p < 1-\varepsilon_x$.
    \item $E(X_{\mathcal{F}}) \subset c_{00}$
\end{enumerate}
\end{proposition}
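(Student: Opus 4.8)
The plan is to exploit that $\mathcal{F}$, being regular, is a compact family of finite subsets of $\nn$ (in the product topology of $2^{\nn}$). Fix $x\in S(X^p_{\mathcal{F}})$ and write $\phi(F):=\sum_{i\in F}|x(i)|^p$, so $\phi(F)\le\|x\|^p=1$ for every $F\in\mathcal{F}$ and $\mathcal{A}_x=\phi^{-1}(\{1\})$. Two preliminary observations do the work. First, since $x$ lies in the closed span of $(e_i)$, the tails $\sum_{i>N}x(i)e_i$ tend to $0$ in norm; using $1$-unconditionality of the basis and that $\mathcal{F}$ is hereditary, this yields: for every $\varepsilon>0$ there is $N$ with $\phi(G)<\varepsilon$ whenever $G\in\mathcal{F}$ and $\min G>N$. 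Second, $\phi$ is continuous on $\mathcal{F}$: if $F_n\to F$ in $\mathcal{F}$, then $F$ is finite, say $F\subseteq[1,M]$, so $F_n\cap[1,M]=F$ for all large $n$; thus $F_n=F\sqcup F_n''$ with $F_n''\in\mathcal{F}$ (hereditarity) and $\min F_n''>M$, and $\phi(F_n)=\phi(F)+\phi(F_n'')$ with $\phi(F_n'')\to 0$ by the first observation.

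For (1) and (2), argue by contradiction, producing in each case a sequence in $\mathcal{F}$ with a convergent subsequence. If (2) fails, choose $F_n\in\mathcal{F}$ with $\phi(F_n)<1$ and $\phi(F_n)\to1$; if (1) fails, choose infinitely many distinct $F_n\in\mathcal{F}_x^1\subseteq\mathcal{A}_x$, so $\phi(F_n)=1$. Pass to a subsequence with $F_n\to F$ (compactness); continuity gives $\phi(F)=1$ and $F$ is finite, so $F_n=F\sqcup F_n''$ for large $n$ with $\phi(F_n)=1+\phi(F_n'')$, and since $\phi\le1$ we get $\phi(F_n'')=0$. In case (2) this forces $\phi(F_n)=1$, contradicting $\phi(F_n)<1$. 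In case (1), $\phi(F_n'')=0$ forces $x$ to vanish on $F_n''$; but $F_n$ is a $1$-set and $F_n''\subseteq F_n$, so $F_n''=\emptyset$ and $F_n=F$ for all large $n$, contradicting distinctness.

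For (3), suppose toward a contradiction that some $x\in E(X_{\mathcal{F}})$ (so $p=1$) has infinite support. By (1) the set $\bigcup\mathcal{F}_x^1$ is finite, so there is $i_0\in\supp x\setminus\bigcup\mathcal{F}_x^1$. No member of $\mathcal{A}_x$ contains $i_0$: if $i_0\in F\in\mathcal{A}_x$, then $F\cap\supp x$ is a $1$-set containing $i_0$, contrary to the choice of $i_0$. Let $\varepsilon_x$ be the $\varepsilon$-gap of (2), put $t=\tfrac12\varepsilon_x$, and set $y=x+te_{i_0}$, $z=x-te_{i_0}$; then $x=\tfrac12(y+z)$ and $y\ne z$. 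For $F\in\mathcal{F}$: if $i_0\notin F$ then $\sum_{i\in F}|y(i)|=\phi(F)\le1$; if $i_0\in F$ then $F\notin\mathcal{A}_x$, so $\phi(F)<1-\varepsilon_x$ and $\sum_{i\in F}|y(i)|\le\phi(F)+t<1$. Hence $\|y\|_{X_{\mathcal{F}}}\le1$, and the same bound holds for $z$, contradicting the extremality of $x$. Therefore $\supp x$ is finite.

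I expect the continuity of $\phi$ on $\mathcal{F}$ to be the only delicate point: one must exchange the pointwise limit $F_n\to F$ with the sums defining $\phi$, whose number of summands is not uniformly bounded over $\mathcal{F}$. The resolution is the elementary fact that a vector in the closed span of $(e_i)$ carries uniformly small mass on sets of $\mathcal{F}$ lying far out in $\nn$; granting this, together with the finiteness of the members of $\mathcal{F}$ (so that a sequence converging to $F$ is eventually of the form $F$ together with a far-out tail, whose mass must then vanish once $\phi(F)=1$), all three parts follow quickly.
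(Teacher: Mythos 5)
Your proof is correct, but for items (1) and (2) it takes a genuinely different route from the paper. The paper does not reprove these facts: it quotes the $p=1$ case from the cited preprint of Beanland--Duncan--Holt--Quigley (their Lemma 2.5) and reduces general $p$ to $p=1$ via the substitution $x\mapsto x^p=\sum_i|x(i)|^pe_i$, noting that $\mathcal{A}_x$ for $x\in X^p_{\mathcal{F}}$ coincides with $\mathcal{A}_{x^p}$ for $x^p\in X_{\mathcal{F}}$. You instead give a self-contained compactness argument: $\mathcal{F}$ is compact in $2^{\mathbb{N}}$, the map $\phi(F)=\sum_{i\in F}|x(i)|^p$ is sequentially continuous on $\mathcal{F}$ because tails of $x$ carry uniformly small mass on far-out members of $\mathcal{F}$, and a limit point of a sequence violating (1) or (2) yields a finite $F$ with $\phi(F)=1$ which the $F_n$ eventually contain, forcing the contradiction. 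This buys independence from the external lemma and works verbatim for every regular family and every $p\in[1,\infty)$, at the cost of being longer than the paper's two-line reduction. The only step to tighten is the continuity argument: from $F_n\to F\subseteq[1,M]$ you record only $\min F_n''>M$, whereas invoking your first observation requires $\min F_n''\to\infty$; this follows at once by applying the same window argument to $[1,K]$ for every $K$, as your closing paragraph indicates, so it is an imprecision rather than a gap. For item (3) your argument is essentially the paper's: both pick a coordinate of $\supp\,x$ lying in no $1$-set (you use that $\bigcup\mathcal{F}^1_x$ is finite; the paper takes $k\in\supp\,x$ beyond all $1$-sets), note that no member of $\mathcal{A}_x$ contains it, and perturb that coordinate by less than the $\varepsilon$-gap to exhibit $x$ as a proper midpoint; your justification that such sets lie outside $\mathcal{A}_x$ (via $F\cap\supp\,x\in\mathcal{F}^1_x$) is if anything slightly more explicit than the paper's.
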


\begin{proof}
The case of $p=1$ in the above proposition is proved in \cite{BDHQ-preprint}. For a vector $x= \sum_i x(i) e_i$ define $x^p = \sum_i |x(i)|^p e_i$. Observe that if $\|\sum_i x(i) e_i\|_{X_{\mathcal{F}}^p} = 1$ then $\|\sum_i |x(i) |^p e_i\|_{X_{\mathcal{F}}} = 1$. Using  \cite[Lemma 2.5]{BDHQ-preprint} we can find $\varepsilon_{x^p}>0$ so that 
$$\sum_{i\in F} |x(i)|^p <1- \varepsilon_{x^p}$$
for all $F\in \mathcal{F} \setminus \mathcal{A}_{x^p}$. Note that $F\in \mathcal{A}_{x}$ for $x \in X_{\mathcal{F}}^p$ if and only if $F \in \mathcal{A}_{x^p}$ for $x^p \in X_{\mathcal{F}}$. This proves the first two claims.

Suppose that $x \in S(X_{\mathcal{F}}^p)\setminus c_{00}$. Let $k$ with $x(k)\not=0$ be larger than the maximum of every $F \in \mathcal{F}_x^1$. Note it is not possible for $F\cup\{k\} \in {\mathcal{F}}$ for any $F \in \mathcal{F}_x^1$. That is, $\mathcal{F}_x^1$ consists of only maximal sets. Therefore if we consider $F \in {\mathcal{F}}$ that contains $k$ then $F\not \in \mathcal{F}_x^1$ and so $$(\sum_{i \in F}|x(i)|^p)^{1/p}< (1-\varepsilon_x)^{1/p} \leqslant 1-\varepsilon_x/p.$$
We can therefore perturb $x(k)$ by a value less than $\varepsilon_x/p$ to produce $y,z \in S(X_{\mathcal{F}}^p)$ with $x=1/2(y+z)$. This is the desired result. 
\end{proof}

In Theorem \ref{only reflexive} we will give a characterization for the extreme points of $E(X_{\mathcal{S}_\alpha}^p), 0<\alpha < \omega_1, 1 < p < \infty$. This is the main result of this section. In the proof of the theorem, we will need to use a few decompositions of the points $x \in S(X_{\mathcal{S}_\alpha}^p)$, given by Lemma \ref{the big one}.

The proof of Lemma \ref{the big one} uses the next result that  follows from the significantly stronger statement in \cite[Proposition 12.9]{ATol-Memoirs}.

\begin{proposition}
Fix ordinals $\eta<\alpha <\omega_1$ and $p\in [1,\infty)$. For each $\varepsilon >0$ and $n\in \mathbb{N}$ there exist $F \in \mathcal{S}_\alpha^{MAX}$ with $n \leqslant \min F$ and a sequence non-negative of scalars $(a_i)_{i \in F}$ with $\sum_{i \in F}a_i^p =1$ so that for each $G \in \mathcal{S}_\eta$,
$ \sum_{i \in G} a_i^p <\varepsilon.$
\label{RAA}
\end{proposition}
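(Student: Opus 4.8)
The plan is to first eliminate the parameter $p$ and then extract the statement from the substantially stronger \cite[Proposition 12.9]{ATol-Memoirs}. Writing $b_i=a_i^p$ sets up a bijection between nonnegative scalar families $(a_i)_{i\in F}$ and nonnegative scalar families $(b_i)_{i\in F}$ under which $\sum_{i\in F}a_i^p=1$ becomes $\sum_{i\in F}b_i=1$ and $\sum_{i\in G}a_i^p<\varepsilon$ becomes $\sum_{i\in G}b_i<\varepsilon$. Hence it suffices to treat $p=1$: given $\eta<\alpha<\omega_1$, $\varepsilon>0$ and $n\in\mathbb{N}$, produce $F\in\mathcal{S}_\alpha^{MAX}$ with $n\le\min F$ and a probability vector $(b_i)_{i\in F}$ (that is, $b_i\ge 0$ and $\sum_{i\in F}b_i=1$) with $\sum_{i\in G}b_i<\varepsilon$ for every $G\in\mathcal{S}_\eta$.

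For the $p=1$ statement I would quote \cite[Proposition 12.9]{ATol-Memoirs}. In the language of special convex combinations used there, that result produces---for any prescribed ordinal $\alpha$, any $\varepsilon>0$ and any $n$---a convex combination $\sum_{i\in F}b_ie_i$ whose support $F$ is a maximal $\mathcal{S}_\alpha$ set with $\min F\ge n$, with $\sum_{i\in F}b_i=1$ and $\sum_{i\in G}b_i<\varepsilon$ for every $G\in\mathcal{S}_\eta$; the full statement there is considerably stronger, being part of the theory of special convex combinations, but only this consequence is needed. Should one only extract such a combination supported on some non-maximal $H\in\mathcal{S}_\alpha$ with $\min H\ge n$, it is enough to enlarge $H$ to a maximal $F\in\mathcal{S}_\alpha^{MAX}$ with $F\supseteq H$ and $\min F=\min H$ (append elements larger than $\max H$ until maximality is reached, which terminates because $\mathcal{S}_\alpha$ is compact) and set $b_i=0$ on $F\setminus H$; this keeps $\sum_{i\in F}b_i=1$ and only decreases each sum $\sum_{i\in G}b_i$, and the statement permits zero weights. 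If strictly positive weights were wanted one would redistribute an arbitrarily small amount of mass onto $F\setminus H$ after first replacing $\varepsilon$ by $\varepsilon/2$.

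If a self-contained argument were preferred, I would instead prove the $p=1$ statement by transfinite induction on $\alpha$. At a successor $\alpha=\beta+1$, decompose a maximal $\mathcal{S}_{\beta+1}$ set as $F=E_1<\cdots<E_m$ with each $E_j\in\mathcal{S}_\beta^{MAX}$ and $m=\min E_1$ (Remark~\ref{split em up}), apply the inductive hypothesis with a much smaller tolerance $\delta$ to each block $E_j$, and average the resulting block probability vectors with weights $1/m$; at a limit $\alpha$, use that $\mathcal{S}_\alpha\supseteq\mathcal{S}_{\alpha_k}$ on a tail and apply the inductive hypothesis at an index $\alpha_k>\eta$, then again enlarge and pad with zeros. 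The step I expect to be the main obstacle is the quantitative estimate that no $G\in\mathcal{S}_\eta$ with $G\subseteq F$ captures more than a $C/m$ fraction of the total mass: this needs the combinatorial input that the set of blocks met by $G$, read off through their minima, is itself a small set, which rests on the tail-domination facts $\mathcal{S}_\eta\subseteq\mathcal{S}_\beta$ (or $\mathcal{S}_\eta\subseteq\mathcal{S}_{\alpha_k}$ at limits) together with the fact that a maximal $\mathcal{S}_{\beta+1}$ set respects its $\mathcal{S}_1$-indexed block decomposition, and it forces one to separate the cases $\eta<\beta$ and $\eta=\beta$ and to push every block-minimum past the thresholds coming from these domination statements. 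Packaging precisely this bookkeeping is what \cite[Proposition 12.9]{ATol-Memoirs} already does, which is why I would follow the first route.
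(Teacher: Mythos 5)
Your proposal matches the paper's treatment: the paper gives no independent argument for Proposition \ref{RAA} and simply notes that it follows from the significantly stronger \cite[Proposition 12.9]{ATol-Memoirs}, which is exactly your primary route, and your preliminary reduction of general $p$ to $p=1$ via $b_i=a_i^p$ is correct and harmless. The alternative inductive sketch is not needed and is not what the paper does, so nothing further is required.
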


\begin{lemma}
Let $\mathcal{F} \in \{\mathcal{S}_\alpha : 0 < \alpha <\omega_1\}$, $p \in [1,\infty)$ and $x \in S(X^p_{\mathcal{F}})$.  Then the following hold:
\begin{enumerate}
    \item There exist $x_1,x_2 \in S(X^p_{\mathcal{F}})$ with $x_1 \in c_{00}$ and $x=\frac{1}{2}(x_1+x_2)$.
    \item If $x \in c_{00}$, there exist $x_1,x_2 \in S(X^p_{\mathcal{F}})\cap c_{00}$ so that both $x_1$ and $x_2$ have non-maximal 1-sets and $x=\frac{1}{2}(x_1+x_2)$. 
    \item If $x\in c_{00}$, there exist $x_1,x_2 \in S(X^p_{\mathcal{F}})\cap c_{00}$ so that $x=\frac{1}{2}(x_1 +x_2)$ and for each $i \leqslant \max \supp ~x_1$  there is an $F \in \mathcal{A}_{x_1}$ with $i \in F$. 
\end{enumerate}
\label{the big one}
\end{lemma}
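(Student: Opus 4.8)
The plan is to treat the three parts in turn, each time writing $x_1$ down explicitly and then checking $\|x_1\|=\|x_2\|=1$ by hand, with the two inputs from Proposition~\ref{lots of items}: $\mathcal{F}^1_x$ is finite (so $M_0:=\max\bigcup\mathcal{F}^1_x<\infty$), and there is an $\varepsilon$-gap $\varepsilon_x>0$ with $\sum_{i\in F}|x(i)|^p<1-\varepsilon_x$ for every $F\in\mathcal{F}\setminus\mathcal{A}_x$; note also $\mathcal{F}^1_x\neq\emptyset$ whenever $\|x\|=1$, else all sets would be $<1-\varepsilon_x$. Throughout I use that $(e_i)$ is a Schauder basis, so $\|x-x^{(M)}\|\to0$ for $x^{(M)}:=\sum_{i\le M}x(i)e_i$. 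In each part, if $x$ already has the desired feature I take $x_1=x_2=x$, so I only discuss the complementary case. \emph{Part (1).} If $x\notin c_{00}$, fix $M\ge M_0$ so large that $(2^p-1)\|x-x^{(M)}\|^p\le\varepsilon_x$ and put $x_1=x^{(M)}$, $x_2=2x-x^{(M)}$. Then $\|x_1\|=1$ (the set $[1,M]$ contains every 1-set of $x$) and $x_1\neq x$. For $x_2$: given $F\in\mathcal{F}$, split it along $M$ as $F=F_1\sqcup F_2$, so $\sum_{i\in F}|x_2(i)|^p=\sum_{i\in F_1}|x(i)|^p+2^p\sum_{i\in F_2}|x(i)|^p$; if $F\in\mathcal{A}_x$ then $F$ contains a 1-set, which lies in $[1,M]$, so $x$ vanishes on $F_2$ and the sum is $\le1$; if $F\notin\mathcal{A}_x$ then $\sum_{i\in F}|x(i)|^p<1-\varepsilon_x$ and $\sum_{i\in F_2}|x(i)|^p\le\|x-x^{(M)}\|^p$ (as $F_2\subseteq(M,\infty)$), so the sum is $<(1-\varepsilon_x)+(2^p-1)\|x-x^{(M)}\|^p\le1$. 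Hence $\|x_2\|\le1$, and $\ge1$ since $x_2$ agrees with $x$ on $[1,M]$.

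\emph{Part (3).} Call $i$ uncovered for a vector $w$ if $i\le\max\supp w$ but $i$ belongs to no member of $\mathcal{A}_w$. Starting from $x$, while the current $w$ has an uncovered coordinate $j$, replace $|w(j)|$ by $\theta_j:=\min\{\,(1-\sum_{i\in F\setminus\{j\}}|w(i)|^p)^{1/p}: F\in\mathcal{F},\ j\in F\,\}$ (a minimum over finitely many values, equal to $1$ when $F=\{j\}$), keeping the sign of $w(j)$ or choosing one if $w(j)=0$. Uncoveredness of $j$ forces $\theta_j>|w(j)|$, so this is a strict increase that cannot overshoot (every $F\ni j$ was $<1$ and rises to at most $1$) and keeps $\|w\|=1$, while now $j$ lies in some $\mathcal{A}_w$-set; moreover every previously tight set avoided $j$ (it was uncovered), so tight sets remain tight and the union of the $\mathcal{A}_w$-sets inside $[1,\max\supp]$ strictly grows. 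As $\max\supp$ never changes, the process stops at $x_1$ in which every $i\le\max\supp x_1$ is covered. Set $x_2=2x-x_1$: on untouched coordinates $x_2=x$, and on a touched $j$, $|x_2(j)|\le|x_1(j)|$ because $|x_1(j)|=\theta_j>|x(j)|$; hence $\sum_{i\in F}|x_2(i)|^p\le\sum_{i\in F}|x_1(i)|^p\le1$ for every $F\in\mathcal{F}$, and $\|x_2\|\ge1$ because touched coordinates lie in no 1-set of $x$, so $x_2$ agrees with $x$ on each 1-set of $x$.

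\emph{Part (2).} The remaining case is that every 1-set $G_1,\dots,G_t$ of $x$ is maximal. The structural observation I would use is that, since $G_s$ is maximal, no $F\in\mathcal{F}$ can contain all of $G_s$ together with an index beyond $\max\supp x$; consequently every $F\in\mathcal{F}$ that meets $(\max\supp x,\infty)$ has $F\cap[1,\max\supp x]\notin\mathcal{A}_x$, so $\sum_{i\in F\cap[1,\max\supp x]}|x(i)|^p<1-\varepsilon_x$. Using Proposition~\ref{RAA} (with an auxiliary $\eta<\alpha$ chosen so that the $\mathcal{F}$-subsets of a far-out set that can be reached, inside $\mathcal{F}$, from $\supp x$ are all $\mathcal{S}_\eta$-sets, together with the finite-order Schreier facts such as Remark~\ref{squeeze in}) I would place, far beyond $\max\supp x$, a flat nonnegative vector $v$ supported on a non-maximal $\mathcal{F}$-set $F'$, with total mass $\sum_{i\in F'}|v(i)|^p=\delta\in(0,1]$ tuned so that the best $\mathcal{F}$-configuration mixing $\supp x$ and $F'$ totals exactly $1$ at some \emph{non-maximal} set while every other mixed configuration stays $\le1$ — the trade-off between ``how much of $\supp x$'' and ``how much of $F'$'' an $\mathcal{F}$-set can collect being exactly balanced by $\delta$, and made uniform by the flatness. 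Then $x_1=x+v$, $x_2=x-v$ have $x=\tfrac12(x_1+x_2)$, agree in absolute value off $\supp x$ (so share the new non-maximal 1-set), and satisfy $\|x_1\|=\|x_2\|=1$: sets disjoint from $F'$ or from $\supp x$ are $\le1$ at once, mixed sets are $\le1$ by the tuning of $\delta$, and the new 1-set gives $\ge1$.

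\emph{The main obstacle is Part~(2).} Because $x=\tfrac12(x_1+x_2)$ forces $x_1=x+v$ and $x_2=x-v$, a single perturbation must create a non-maximal 1-set on both sides without overshooting; in the presence of long maximal Schreier sets — already for $\mathcal{S}_2$ a set such as $\{3,\dots,7\}$ can be joined to an arbitrarily long far-out interval inside $\mathcal{S}_2$, while $\{2,\dots,7\}$ cannot — fixing the position and the exact mass $\delta$ of the bump genuinely needs the reachability combinatorics of $\mathcal{S}_\alpha$ and the Tsirelson-type averaging of Proposition~\ref{RAA}, rather than a merely uniformly flat vector.
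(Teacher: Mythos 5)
Your parts (1) and (3) are correct and are essentially the paper's own arguments: in (1) you truncate past all $1$-sets and use the $\varepsilon$-gap of Proposition \ref{lots of items} (your constant $(2^p-1)\|x-x^{(M)}\|^p\leqslant\varepsilon_x$ is even the right bookkeeping, since $|x_2(i)|^p=2^p|x(i)|^p$ beyond the truncation point), and in (3) you iterate the same ``raise the smallest uncovered coordinate until some set containing it becomes tight'' procedure as the paper, with a clean termination and norm check.

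Part (2), however, has a genuine gap, and it is exactly the part where the real work of the lemma lies. The mechanism you propose --- a flat far-out bump $v$ on a non-maximal set $F'$ with total mass $\delta\in(0,1]$ tuned so that the best \emph{mixed} configuration (meeting both $\supp x$ and $F'$) sums to exactly $1$ at a non-maximal set --- does not work as stated. With a flat vector of the kind Proposition \ref{RAA} supplies, every $F\in\mathcal{F}$ with $\min F\leqslant\max\supp x$ captures only an $\varepsilon_x$-small fraction of the bump's mass, so its total stays strictly below $(1-\varepsilon_x)+\varepsilon_x\delta\leqslant 1$: the equation ``mixed maximum $=1$'' has no solution for any $\delta\leqslant 1$, and if $\delta<1$ the pure-$F'$ subsets are also $<1$, so $x\pm v$ acquire no new $1$-set at all and the old $1$-sets are still maximal. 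Even setting that aside, nothing in your sketch forces a tight mixed set to be \emph{non-maximal} (a maximizing configuration tends to be maximal). A further concrete flaw: your choice of $\eta$ ``so that all $\mathcal{F}$-sets reachable from $\supp x$ are $\mathcal{S}_\eta$-sets'' is only possible when $\alpha$ is a limit ordinal (then $F\in\mathcal{S}_{\alpha_{\min F}}\subseteq\mathcal{S}_{\alpha_N}$ with $N=\max\supp x$); when $\alpha=\eta+1$ such a set is merely a union of at most $N$ members of $\mathcal{S}_\eta$, and no single smaller family contains them. This is precisely why the paper asks Proposition \ref{RAA} for smallness $\varepsilon_x/(2N)$ on $\mathcal{S}_\eta$-sets and runs a separate successor-case estimate ($d\leqslant N$ pieces, each contributing less than $\varepsilon_x/N$). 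The correct construction is the degenerate case of yours: take $A\in\mathcal{S}_\alpha^{MAX}$ with $\min A>N$ from Proposition \ref{RAA}, delete $\max A$ and renormalize so the non-maximal set $F_0=A\setminus\{\max A\}$ carries full mass $1$; then $F_0$ itself is the common non-maximal $1$-set of $x\pm v$, and the entire content of the proof --- which your proposal leaves open --- is the verification, via the $\varepsilon$-gap together with the limit/successor dichotomy above, that every $F$ with $\min F\leqslant N$ remains strictly below $1$.
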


\begin{proof}
We first prove item (1). Let $x \in S(X_{\mathcal{F}}^p)$. Using Proposition \ref{lots of items} we can find $\varepsilon_x>0$ (the $\varepsilon$-gap for $x$). Fix $N \in \mathbb{N}$ so that $\|\sum_{i > N} x(i) e_i\|^p <\varepsilon_x/2$ and $N > \max\{\max F : F \in \mathcal{F}_x^1\}$. Let $x_1 = \sum_{i=1}^N x(i) e_i$ and $x_2 = 2x-x_1$. Clearly $\|x_1 \|\leqslant \|x\|=1$. It suffices to prove that $\|x_2\|\leqslant 1$. Suppose first that $F \in \mathcal{A}_x$. Then there is a $G \subset F$ with $G \in \mathcal{F}_x^1$. Then $\max G < N$, and so 
$$(\sum_{i \in F} |x_2(i)|^p)^{1/p} = (\sum_{i \in G} |x(i)|^p)^{1/p}=1$$
If $F \in \mathcal{F}\setminus \mathcal{A}_x$ we have the following:
$$(\sum_{i \in F} |x_2(i)|^p)^{1/p}=(\sum_{i \in F, i \leqslant N} |x(i)|^p + 2\sum_{i \in F, i > N} |x(i)|^p)^{1/p}< (1 -\varepsilon_x + \varepsilon_x)^{1/p} \leqslant 1. $$
Hence, $x_1, x_2 \in Ba(X_{\mathcal{F}}^p)$, and since $x=\frac{1}{2}(x_1+x_2)$ and $\|x\| = 1$, we must have $x_1, x_2 \in S(X_{\mathcal{F}}^p)$. This finishes the proof of item $(1)$.

Let us prove item $(2)$. We may assume that $x \in c_{00}$ has only maximal $1$-sets and let $N=\max \supp \,x$. Recall that $\mathcal{F}=\mathcal{S}_\alpha$ for some ordinal $0<\alpha <\omega_1$. We must distinguish between the cases that $\alpha$ is a successor and limit ordinal. In both cases we apply Proposition \ref{RAA}. In the limit case we apply this Proposition for $\eta=\alpha_N$ and in the successor case, for $\eta$ with $\eta+1 =\alpha$. Using Proposition \ref{RAA} we can find $A \in \mathcal{S}_\alpha^{MAX}$ with $\min A >N$ and convex scalars $(a_i)_{i \in A}$ so that for all $G \in \mathcal{S}_\eta$
$$\sum_{i \in G} a^p_i <\frac{\varepsilon_x}{2N}. $$
Let $i_0 = \max A$ and $F_0 = A \setminus \{i_0\}$ and $b^p_i = a_i^p/(1-a^p_{i_0})$ for $i \in F_0$. Clearly $(b^p_i)_{i \in F_0}$ are convex scalars, $F_0$ is non-maximal and if $G \in \mathcal{S}_\eta$,
$$\sum_{i \in G} b^p_i <\frac{\varepsilon_x}{N}. $$
Let $x_1 = x+ \sum_{i\in F_0} b_i e_i$ and $x_2= x- \sum_{i\in F_0} b_i e_i$. Since $x_1$ and $x_2$ both have $F_0$ as a non-maximal $1$-sets we are done once we can show that $\|x_1\|=\|x_2\| =1$. In the first case we assume $F \in \mathcal{A}_x$. Since $x$ has only maximal 1-sets we know  that $x(i)\not=0$ for all $i \in F$ therefore $F \subset \supp\, x$ and 
$$\sum_{i \in F} |x_1(i)|^p=\sum_{i \in F}|x(i)|^p =1.$$
Now suppose that $F \in \mathcal{F} \setminus \mathcal{A}_x$. In the case that $\alpha$ is a limit ordinal we have the following argument: If $\min F > \max \supp\,x$ the $\sum_{i \in F} |x_1(i)|^p \leqslant \sum_{i \in F} b_i^p \leqslant 1$. Therefore we assume $\min F \leqslant \max \supp \,x = N$. By definition of $\mathcal{S}_\alpha$ for $\alpha$ a limit ordinal we have $F \in \mathcal{S}_{\alpha_{\min F}}\subset \mathcal{S}_{\alpha_{N}}$. Then
$$\sum_{i \in F} |x_1(i)|^p = \sum_{i \in F~ i \leqslant N}|x(i)|^p + \sum_{i \in F~ i > N}b^p_i < 1-\varepsilon_x + \frac{\varepsilon_x}{N} <1.$$
This concludes the limit ordinal case.

Now we consider the case that $\alpha =\eta +1$. Again we may assume that $\min F \leqslant \max\supp~ x$. We know that, by definition,  $F=\cup_{i=1}^d F_i$ where $F_1< F_2 <\cdots < F_d$ and $F_i \in S_\eta$ and $d \leqslant \max \supp ~x \leqslant N$. Consider the following estimate.
\begin{equation*}
    \begin{split}
      \sum_{i \in F} |x_1(i)|^p = 1- \varepsilon_x + \sum_{i \in F~i >N} b_i^p \leqslant 1- \varepsilon_x + \sum_{j=1}^d\sum_{i \in F_j~i >N} b_i^p < 1-\varepsilon_x + d\, \frac{\varepsilon_x}{N} <1.
    \end{split}
\end{equation*}
This shows that $\|x_1\|\leqslant 1$. The same proof yields $\|x_2\|\leqslant 1$, as desired.  Again, since $x=\frac{1}{2}(x_1+x_2)$ and $\|x\| = 1$, we must have $x_1, x_2 \in S(X_{\mathcal{F}}^p)$.

Finally, we prove item $(3)$ of the lemma. Let $x \in c_{00}$ and consider the following procedure: Let $i_1\in [1, \max \supp \,x]$ be minimum so that for all $F \in {\mathcal{F}}$, with $i_1 \in F$, $\sum_{i \in F} |x(i)|<1$. If no such $i_1$ exists we are done (let $x=x_1=x_2$). 
Since there are only finitely many $F \in \mathcal{F}$ containing $i_1$ with $\max F \leqslant \max\supp\, x$ we can find $F_1 \in \mathcal{F}$ with 
$$(\sum_{i \in F_1} |x(i)|^p)^{1/p}= \sup\{ (\sum_{i \in F} |x(i)|^p)^{1/p}: F \in \mathcal{F}, ~i_1 \in F\}.$$
Find $\delta_{i_1} >0$ so that
$$|x(i_1) + \textrm{sign}(x(i_1))\delta_{i_1}|^p + \sum_{i \in F_1, i\not=i_1} |x(i)|^p= 1 $$
Let $x_{1,1} = x+\textrm{sign}(x(i_1))\delta_{i_1} e_{i_1}$ and $x_{2,1} = x- \textrm{sign}(x(i_1))\delta_{i_1}e_{i_1}$. We shall prove that $\|x_{1,1}\|\leqslant 1$. As such we must show for each $F \in \mathcal{F}$, $\sum_{i \in F}|x_{1,1}(i)|\leqslant 1$. The case that $F\in \mathcal{F}$ and does not contain $i_1$ it follows from the fact that $\|x\|\leqslant 1$ and so we assume $i_1 \in F$. In this case, we use the definition of $F_1$ to observe that

\begin{equation*}
\begin{split}
   \sum_{i \in F}|x_{1,1}(i)|^p & =|x(i_1) + \textrm{sign}(x(i_1))\delta_{i_1}|^p + \sum_{i \in F, i\not=i_1} |x(i)|^p \\ & \leqslant |x(i_1) + \textrm{sign}(x(i_1))\delta_{i_1}|^p + \sum_{i \in F_1, i\not=i_1} |x(i)|^p =1. 
   \end{split}
\end{equation*}
Therefore $\|x_{1,1}\|\leqslant 1$. Since $|x_{2,1}(i_1)|\leqslant |x_{1,1}(i_1)|$ we have $\|x_{2,1}\|\leqslant 1$ and by the same reasons as the previous items, we conclude that $\|x_{1,1}\| = \|x_{2,1}\| = 1$ and also, trivially, that $x=\frac{1}{2}(x_{1,1}+x_{2,1})$.
In order to produce a vector satisfying the claim we inductively apply the above procedure as follows: Find the minimum $i_2> i_1$ in $[1,\max\supp \,x]$ and so that for all $F \in \mathcal{F}$, with $i_2 \in F$, $\sum_{i \in F} |x(i)|<1$. If no such $i_2$ exists we are done. Since there are only finitely many $F \in \mathcal{F}$ containing $i_2$ with $\max F \leqslant \max\supp\, x$ we can find $F_2 \in \mathcal{F}$ with 
$$(\sum_{i \in F_2} |x_{1,1}(i)|^p)^{1/p}= \sup\{ (\sum_{i \in F} |x_{1,1}(i)|^p)^{1/p}: F \in \mathcal{F}, ~i_2 \in F\}.$$
Find $\delta_{i_2} >0$ so that
$$|x_{1,1}(i_2) + \textrm{sign}(x_{1,1}(i_2))\delta_{i_1}|^p + \sum_{i \in F_2, i\not=i_1} |x_{1,1}(i)|^p= 1 $$
Let $x_{1,2} = x_{1,1}+\textrm{sign}(x(i_2))\delta_{i_2} e_{i_2}$ and $x_{2,2} = x_{1,2}- \textrm{sign}(x(i_2))\delta_{i_2}e_{i_2}$. Arguing as before we have $\|x_{1,2}\|\leqslant 1, \|x_{2,2}\| \leqslant 1$ and $x=\frac{1}{2}(x_{1,2}+x_{2,2})$. This procedure can be iterated finitely many times to exhaust $\supp \, x$ in order to produce for some $n \in \mathbb{N}$ $x_{1,n}$ and $x_{2,n}$ with $\|x_{1,n}\|\leqslant 1, \|x_{2,n}\| \leqslant 1$ and $x=\frac{1}{2}(x_{1,n}+x_{2,n})$ so that $x_{1,n}$ has the property that for each $i\leqslant \max \supp x_{1,n}$ there is an $F \in \mathcal{A}_{x_{1,n}}$ with $i \in F$. This yields the desired decomposition.
\end{proof}

 The next theorem is our main result in this section. It provides a characterization of extreme points in $Ba(X_{\mathcal{F}}^p)$ and $p\in (1,\infty)$. Such a characterization will be used to prove, in the next section, that the space $X^p_{\mathcal{S}_\alpha}$ has the uniform $\lambda$-property, for $1 < p < \infty$ and $0<\alpha < \omega_1$.

\begin{theorem}
\label{only reflexive}
Let $\mathcal{F} \in \{\mathcal{S}_\alpha : 0 < \alpha <\omega_1\}$, $p\in (1,\infty)$  and $x \in S(X_{\mathcal{F}}^p)$. Then $x \in E(X_{\mathcal{F}}^p)$ if and only if $x \in c_{00}$, $\mathcal{A}_x$ has a non-maximal set and for all $i \leqslant \max \supp\,x$ there is an $F \in \mathcal{A}_x$ with $i \in F$. Moreover if $p=1$ then the forward implication holds.
\end{theorem}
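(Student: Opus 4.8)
The plan is to prove the two implications separately, using the structural decompositions from Lemma \ref{the big one} and the $\varepsilon$-gap from Proposition \ref{lots of items}. For the \emph{reverse implication} (for $p\in(1,\infty)$): assume $x\in c_{00}$, that $\mathcal{A}_x$ contains a non-maximal set, and that every coordinate $i\le\max\supp\,x$ lies in some $F\in\mathcal{A}_x$; I want to show $x$ is extreme. Suppose $x=\frac12(y+z)$ with $y,z\in Ba(X_{\mathcal{F}}^p)$. First I would argue $\supp\,y\cup\supp\,z\subseteq\supp\,x$: if some $k>\max\supp\,x$ had, say, $y(k)\ne0$, take the non-maximal $F\in\mathcal{A}_x$; then $F\cup\{k\}\in\mathcal{F}$ (here I use that the non-maximal set can be extended, and that we may assume $\max F<k$ — the family is hereditary and spreading so we can arrange this), and strict convexity of $t\mapsto|t|^p$ forces a contradiction with $\|y\|,\|z\|\le1$. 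More precisely, for each $F\in\mathcal{A}_x$, strict convexity of $\sum_{i\in F}|\cdot|^p$ on the sphere $\sum_{i\in F}|t_i|^p=1$ gives $y(i)=z(i)=x(i)$ for all $i\in F$; since every $i\le\max\supp\,x$ is covered by such an $F$, we get $y=z=x$ on $\supp\,x$, and combined with the support containment, $y=z=x$.

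For the \emph{forward implication} (both $p=1$ and $p\in(1,\infty)$): let $x\in E(X_{\mathcal{F}}^p)$. By Proposition \ref{lots of items}(3) (the $p=1$ case) and Lemma \ref{the big one}(1), $x\in c_{00}$ — indeed if $x\notin c_{00}$, item (1) of the lemma writes $x=\frac12(x_1+x_2)$ with $x_1\in c_{00}$, $x_2\ne x_1$, contradicting extremality. Next, Lemma \ref{the big one}(2) produces $x=\frac12(x_1+x_2)$ with $x_1,x_2$ having non-maximal $1$-sets; since $x$ is extreme, $x=x_1=x_2$, so $x$ itself has a non-maximal $1$-set, hence $\mathcal{A}_x$ contains a non-maximal set. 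Finally, Lemma \ref{the big one}(3) gives $x=\frac12(x_1+x_2)$ with $x_1$ having the covering property (every $i\le\max\supp\,x_1$ lies in some $F\in\mathcal{A}_{x_1}$); extremality forces $x=x_1$, so $x$ has that property. This uses only the three decomposition items and the definition of extreme point, so it is quite short; the content is entirely in Lemma \ref{the big one}, which has already been proved.

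The main obstacle I anticipate is the reverse implication, specifically the strict-convexity argument and making sure the non-maximal set in $\mathcal{A}_x$ can genuinely be used to ``catch'' coordinates outside $\supp\,x$. One needs: (a) if $F\in\mathcal{A}_x$ is non-maximal and $k>\max F$ then $F\cup\{k\}\in\mathcal{F}$ (this is exactly the stated characterization of non-maximality: ``$F$ non-maximal iff $F\cup\{l\}\in\mathcal{S}_\alpha$ for every $l>\max F$''); and (b) the elementary fact that if $a,b\ge0$, $p>1$, and $\left(\tfrac{a+b}{2}\right)^p + c = 1$ with $a^p+c\le1$ and $b^p+c\le1$, then $a=b$. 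Note this is where $p>1$ is essential — for $p=1$ the function is affine and strict convexity fails, which is precisely why only the forward implication is claimed for $p=1$. Assembling (a), (b), and the coordinatewise equality across all $F\in\mathcal{A}_x$ into the conclusion $y=z=x$ is routine but should be written carefully, handling separately the coordinates inside some $F\in\mathcal{A}_x$ and (via the non-maximal set) those that would be outside $\supp\,x$.
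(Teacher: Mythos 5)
Your argument is correct, and the interesting difference is in the forward implication. The paper proves that direction by contraposition with explicit perturbations: if every set in $\mathcal{A}_x$ is maximal, it uses the finiteness of $\mathcal{F}^1_x$ and the $\varepsilon$-gap of Proposition \ref{lots of items} to perturb a coordinate $k$ beyond all $1$-sets; if some $i\leqslant\max\supp\,x$ is uncovered, it perturbs that coordinate. You instead observe that an extreme point must absorb each decomposition $x=\tfrac12(x_1+x_2)$ of Lemma \ref{the big one}, hence coincides with the distinguished summand $x_1$ and inherits, in turn, finite support, a non-maximal $1$-set, and the covering property. This is shorter, works uniformly for $p\in[1,\infty)$, and makes explicit why the theorem dovetails with the way it is used in Theorem \ref{lambda}; the cost is that it leans entirely on Lemma \ref{the big one}, whereas the paper's perturbation argument needs only Proposition \ref{lots of items}. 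Your reverse implication is essentially the paper's: strict convexity of $\ell_p^{|F|}$ (equivalently, every point of its sphere is extreme) forces $y(i)=z(i)=x(i)$ on each $F\in\mathcal{A}_x$, the covering hypothesis handles all $i\leqslant\max\supp\,x$, and the non-maximal set, extended by a coordinate $k>\max\supp\,x$, kills everything beyond the support. Two small points to write carefully: your fact (b) is only the scalar form of strict convexity, and what is actually used is the vector statement for $\ell_p^{|F|}$ (which also fixes signs); and the reduction to a non-maximal $F$ with $\max F\leqslant\max\supp\,x$ is cleanest by intersecting a non-maximal member of $\mathcal{A}_x$ with $[1,\max\supp\,x]$ and invoking the stated characterization of non-maximality in $\mathcal{S}_\alpha$ (the resulting set still has a proper extension, hence extends by every $l>\max F$) -- ``hereditary and spreading'' alone does not quite do it, though the paper is equally terse at this step.
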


\begin{proof}
We first prove the reverse implication. Suppose $x \in c_{00}$ and satisfies the assumptions. Let $x=1/2(z+y)$ and $F \in$  $\mathcal{A}_x$. Then $\sum_{i \in F} |x(i)|^p=1$. Since every element of the sphere of $\ell_p^{|F|}$ is an extreme point, we know in order for $\sum_{i \in F} |y(i)|^p=\sum_{i \in F} |z(i)|^p=1$ we must have $x(i)=y(i)=z(i)$  for all $i \in F$. Our assumption is that all $i \leqslant \max \supp \,x$ are contained in a set $F \in \mathcal{A}_x$. Therefore $x(i)=y(i)=z(i)$ for all such $i \leqslant \max \supp \,x$. Now let $i> \max \supp \,x$. Find a non-maximal $F \in \mathcal{A}_x$ with $\max F \leqslant \max \supp \,x$. Then $F\cup\{i\}\in \mathcal{A}_x$ and consequently $x(i)=y(i)=z(i)$ or else we we could sum over $F\cup\{i\}$ to show that either $y$ or $z$ had norm greater than $1$. Therefore $z=y=x$ which implies that $x \in E(X_{\mathcal{F}}^p)$.

We now prove the forward implication as well as the `moreover' statement. Let $x \in S(X_{\mathcal{F}}^p)$ for $p \in [1,\infty)$. First, Proposition \ref{lots of items} states that $E(X_{\mathcal{F}}^p)$ is a subset of $c_{00}$. We can assume that either every  set in $\mathcal{A}_x$ is maximal or there is an $i \leqslant \max \supp \,x$ not contained in any $F \in \mathcal{A}_x$. In the former case we have $\mathcal{A}_x = \mathcal{F}_x^1$ and since $\mathcal{F}_x^1$ is finite there is a $k>\max\{\max F : F \in \mathcal{F}_x^1\}$. We can perturb $x(k)$ by any value $\delta>0$ with $\delta<\varepsilon_x/p$ and create new vectors $y=x-\delta x(k)e_k$ and $z=x+\delta x(k)e_k$ that are in $S(X_{\mathcal{F}}^p)$ and satisfy $x=1/2(y+z)$. In the later case, we can find the coordinate $k \leqslant \max \supp ~x$ and similarly show that $x$ is not an extreme point.  
\end{proof}

\section{$\lambda$-property for Schreier spaces}
\label{section-lambda}

Recall from the introduction that a space $X$ is said to have the $\lambda$-property if for all $x\in Ba(X)$, there exists $0<\lambda\leqslant 1$ such that $x=\lambda e+(1-\lambda)y$ for some $e \in E(X)$, $y \in    Ba(X)$.

When a vector $x$ can be written in terms of $\lambda, e, y,$ we denote $(e,y,\lambda)\sim x.$ For a vector $x$, we may find different sets $(e,y,\lambda)$ such that $(e,y,\lambda)\sim x$. This leads Aron and Lohman \cite{AronLoh-Pacific} to define the following function: Given $x\in Ba(X)$,
$$\lambda(x)=\sup\{\lambda:(e,y,\lambda)\sim x\}.$$ 
If there exists $\lambda_0>0$ such that for all $x\in Ba(X),\lambda(x)\geqslant  \lambda_0$, we say that $X$ has the uniform $\lambda$-property. Note that for a non-zero $x \in Ba(X)$ we have
$$x = \frac{1}{2}\frac{x}{\|x\|} + \frac{1}{2}(2 \|x\| -1)\frac{x}{\|x\|}.$$

Consequently, in order to verify that $X$ has the $\lambda$-property it suffices to show that for each $x \in S(X)$ there are $(e,y,\lambda) \in E(X)\times Ba(X) \times (0,1]$ with $x \sim (e,y,\lambda)$.

The following is our main theorem of this section. Note that we do not know whether $X_{\mathcal{F}}$ has the $\lambda$-property for every regular family $\mathcal{F}$ and that we have not determined if the space $X_{\mathcal{S}_1}$ has the uniform $\lambda$-property. These remain interesting open questions. 

\begin{theorem}
Let $\alpha$ be a non-zero countable ordinal.
\begin{enumerate}
    \item For $p \in  (1,\infty)$, the space $X^p_{\mathcal{S}_\alpha}$ has the uniform $\lambda$-property.
    \item The space  $X_{\mathcal{S}_\alpha}$ has the $\lambda$-property.
\end{enumerate} 
\label{lambda}
\end{theorem}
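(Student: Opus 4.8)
The plan is to deduce both statements from the decomposition lemmas and extreme point characterization already established. For part (1), given $x \in S(X^p_{\mathcal{S}_\alpha})$ with $p \in (1,\infty)$, I would first apply Lemma \ref{the big one}(1) to write $x = \frac{1}{2}(x_1 + x_2)$ with $x_1 \in c_{00}$. Then apply Lemma \ref{the big one}(2) to $x_1$ to get $x_1 = \frac{1}{2}(x_1', x_1'')$ where both summands lie in $c_{00}$ and have non-maximal $1$-sets; finally apply Lemma \ref{the big one}(3) to $x_1'$ (respectively $x_1''$) to obtain a vector $e$ that is supported on $c_{00}$, has a non-maximal set in $\mathcal{A}_e$, and has every coordinate $i \leqslant \max\supp e$ contained in some $F \in \mathcal{A}_e$. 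By Theorem \ref{only reflexive} this $e$ is an extreme point. Composing these three halvings expresses $x$ as $\frac{1}{8}e + \frac{7}{8}y$ for some $y \in Ba(X^p_{\mathcal{S}_\alpha})$, so $\lambda(x) \geqslant \frac{1}{8}$ uniformly. The only subtlety is making sure that each application of Lemma \ref{the big one} is legitimate: items (2) and (3) require their input to lie in $c_{00}$, which is why the first step (item (1)) is applied before the others, and item (3) does not destroy the ``non-maximal set in $\mathcal{A}$'' property it inherits, so one must check (or observe from the proof of item (3)) that the perturbation producing $x_{1,n}$ keeps a non-maximal set active — in fact it only enlarges $\mathcal{A}$, so this is automatic.

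For part (2), the case $p = 1$, the difficulty is that Theorem \ref{only reflexive} only gives the forward implication; the three conditions (finite support, a non-maximal set in $\mathcal{A}_x$, every coordinate covered) are necessary but possibly not sufficient for extremality in $X_{\mathcal{S}_\alpha}$. So after running the same chain of three decompositions to produce a vector $e$ satisfying those three conditions, I cannot immediately conclude $e \in E(X_{\mathcal{S}_\alpha})$. The plan here is to argue directly: take such an $e$ and suppose $e = \frac{1}{2}(y+z)$ with $y, z \in Ba(X_{\mathcal{S}_\alpha})$. For any $F \in \mathcal{A}_e$ we have $\sum_{i \in F}|e(i)| = 1 = \frac{1}{2}\sum_{i\in F}|y(i)| + \frac{1}{2}\sum_{i \in F}|z(i)| \leqslant 1$, forcing $\sum_{i\in F}|y(i)| = \sum_{i\in F}|z(i)| = 1$; combined with $e(i) = \frac{1}{2}(y(i)+z(i))$ and the triangle inequality $|e(i)| \leqslant \frac12|y(i)|+\frac12|z(i)|$, summing the equalities shows each of these must in fact be an equality, so $y(i)$ and $z(i)$ have the same sign as $e(i)$ and $|y(i)| + |z(i)| = 2|e(i)|$ — but this alone does not pin down $y(i) = z(i) = e(i)$ in the $\ell_1$ setting. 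This is the genuine obstacle.

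To overcome it I would exploit the covering condition more carefully together with the non-maximal set: the point is that the coordinates are linked across many overlapping sets $F \in \mathcal{A}_e$, and once one coordinate is determined the rest follow by a connectivity/propagation argument, and the non-maximal set handles coordinates beyond $\max\supp e$ exactly as in the proof of the reverse implication of Theorem \ref{only reflexive}. The cleanest route is probably: rather than trying to show the vector produced by Lemma \ref{the big one} is extreme, I would further refine it (or choose the decomposition more carefully) so that at least one coordinate, say the one attaining $\|e\|_\infty$ within some particular set, is forced, and then show that the remaining structure is rigid. Alternatively — and this is likely what the authors intend — one observes that Lemma \ref{the big one}(3) can be strengthened or iterated to produce a vector all of whose $1$-sets are genuinely rigid (e.g., a vector of the form with constant-type weights on an $\mathcal{S}_\alpha^{MAX}$-like structure), mirroring the Shura–Trautman argument for $\mathcal{S}_1$. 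I would first check whether the three-fold decomposition output, combined with a direct extremality check in $X_{\mathcal{S}_\alpha}$ using $1$-unconditionality and the overlap of sets in $\mathcal{A}_e$, already yields $e \in E(X_{\mathcal{S}_\alpha})$; I expect it does, with the verification being the main technical content, and then $\lambda(x) \geqslant \frac{1}{8}$ follows as before, giving in particular the (non-uniform, as stated) $\lambda$-property.
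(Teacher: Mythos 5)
Your part (1) is correct and is essentially the paper's own argument: the three successive applications of Lemma \ref{the big one}, the observation that item (3) only enlarges $\mathcal{A}$ so the non-maximal $1$-set from item (2) survives, and then Theorem \ref{only reflexive} (which for $p\in(1,\infty)$ is an equivalence) to get an extreme point with weight $\frac18$, hence the uniform $\lambda$-property.

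Part (2), however, is not a proof: you correctly locate the obstacle --- for $p=1$ the three conditions of Theorem \ref{only reflexive} are only necessary, because the sphere of $\ell_1^{|F|}$ has non-extreme points, so the equalities $\sum_{i\in F}|y(i)|=\sum_{i\in F}|z(i)|=1$ do not force $y(i)=z(i)=x(i)$ --- but you then leave it unresolved, ending with the hope that a ``connectivity/propagation'' or rigidity argument will show the vector produced by Lemma \ref{the big one} is extreme in $X_{\mathcal{S}_\alpha}$. That is exactly the step the paper states it cannot carry out (``we are not able to conclude that $x_{1,1,1}\in E(X_{\mathcal{S}_\alpha})$''), and nothing in your sketch supplies it. The paper's actual device is different and is the missing idea: since $x_{1,1,1}$ is finitely supported, it lies in $\sspan\{e_1,\dots,e_n\}$, and by Carath\'eodory's theorem it is a finite convex combination $\sum_{i=1}^d\lambda_i y_i$ of extreme points $y_i$ of the ball of that finite-dimensional space. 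One then checks $\mathcal{A}_{x_{1,1,1}}\subseteq\mathcal{A}_{y_i}$ (the convexity computation $1=\sum_{j\in F}|x_{1,1,1}(j)|\leqslant\sum_i\lambda_i\sum_{j\in F}|y_i(j)|\leqslant 1$), so each $y_i$ inherits the non-maximal $1$-set; extremality of $y_i$ in the finite-dimensional ball fixes the first $n$ coordinates, and any perturbation in a coordinate $k>n$ is killed by adjoining $k$ to that non-maximal $1$-set. Hence each $y_i\in E(X_{\mathcal{S}_\alpha})$, and $x=\frac18\lambda_1 y_1+(1-\frac18\lambda_1)(\cdots)$ gives the (non-uniform) $\lambda$-property. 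Note also that this route does not attempt to show the single vector $x_{1,1,1}$ is extreme, so your plan of ``choosing the decomposition more carefully'' to force extremality is neither needed nor supported; without the Carath\'eodory step your part (2) remains incomplete.
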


\begin{proof}

First, we prove item $(1)$. Let $x \in S(X_{\mathcal{S}_\alpha}^p)$ for $p \in (1,\infty)$. Using Lemma \ref{the big one} (1), we can find $x_1 \in c_{00}$ and $x_1,x_2 \in S(X_{\mathcal{S}_\alpha}^p)$ and so that $x=1/2(x_1+x_2)$. Now apply Lemma \ref{the big one} (2) to find $x_{1,1}$ and $x_{1,2}$ in $c_{00} \cap S(X_{\mathcal{S}_\alpha}^p)$ each with a non-maximal $1$-set so that $x_1= 1/2(x_{1,1} + x_{1,2})$. Finally, we apply Lemma \ref{the big one} (3) to find $x_{1,1,1}$ and $x_{1,1,2}$ in $c_{00} \cap S(X_{\mathcal{S}_\alpha}^p)$ with $x_{1,1}=1/2(x_{1,1,1}+ x_{1,1,2})$ so that $x_{1,1,1}$ has both a non-maximal $1$-set and for each $i \leqslant \max \supp~x_{1,1,1}$ there is an $F \in \mathcal{A}_{x_{1,1,1}}$ with $i \in F$. Theorem \ref{only reflexive} implies that $x_{1,1,1} \in E(X_{\mathcal{S}_\alpha}^p)$. Therefore $X$ has the uniform $\lambda$-property as
$$x=\frac{1}{8}x_{1,1,1} + \frac{1}{8}x_{1,1,2} + \frac{1}{4}x_{1,2}+\frac{1}{2}x_2.$$

We now prove item $(2)$. The beginning of the proof of (2) is the same, however, we are not able to conclude that $x_{1,1,1} \in E(X_{\mathcal{S}_\alpha})$. We do know, however, that $x_{1,1,1}$ is finitely supported with a non-maximal $1$-set. Therefore there is an $n \in \mathbb{N}$ so that $x_{1,1,1} \in \sspan\{e_1,\cdots,e_n\}$.     By Carath\'eodory's Theorem, every point of the unitary ball of an $n$-dimensional normed space is the convex combination of at most $n+1$ many extreme points of the ball. Hence, there are a $d\leqslant n+1$ and extreme points $(y_i)_{i=1}^d$ of $Ba(\sspan\{e_1,\cdots,e_n\})$ so that $$x_{1,1,1}= \sum_{i=1}^d \lambda_i y_i$$
with $\sum_{i=1}^d \lambda_i=1$ and $\lambda_i \geqslant 0$. Note that $\mathcal{A}_{x_{1,1,1}} \subseteq \mathcal{A}_{y_i}$ for each $i \in \{1, \ldots, d\}$.  Indeed, for every $F \in \mathcal{A}_{x_{1,1,1}}$ we have
$$1 = \sum_{j \in F} |x_{1,1,1}(j)| \leqslant \sum_{i=1}^d \lambda_i \sum_{j \in F} |y_i(j)| \leqslant \sum_{i=1}^d \lambda_i = 1.$$ It follows that each $y_i$ is an extreme point of $X_{\mathcal{S}_\alpha}$ as well. Indeed, if $y_i= 1/2(z+w)$ for $z,w\in Ba(X_{\mathcal{S}_\alpha})$,
then $y_i(k)=z(k)=w(k)$ for all $k \leqslant n$ since $y_i$ is in extreme point of $Ba(\sspan\{e_1,\cdots,e_n\})$ and if $z(k)=y_i(k)+\varepsilon$ for some $k > n$, with $\varepsilon>0$ the coordinate $k$ could be added to a non-maximal 1-set of  $x_{1,1,1}$ (and hence, of  $y_i$ )  in order to witness the fact that $\|z\|>1$. This implies that $y_i$ is in $E(X_{\mathcal{S}_\alpha})$ and so $X_{\mathcal{S}_\alpha}$ has the $\lambda$-property.
\end{proof}

\section{Polyhedrality}
\label{section-poly}

A Banach space $X$ is called polyhedral in if the unit ball of every finite dimensional subspace of $X$ is a polytope (i.e. has finitely many extreme points) 
 and it is called a $(V)$-polyhedral space (which is a stronger property \cite{FonfVes-poly}) if 
$$\sup\{f(x): f \in E(X^*) \setminus D(x)\}<1$$
for all $x \in S(X)$ where $D(x) = \{g \in S(X^*): g(x)=1 \} $. 

We will prove in Theorem \ref{th53} that for each countable non-zero $\alpha$, $X_{\mathcal{S}_\alpha}$ is a $(V)$-polyhedral space. Moreover, $Ba(X_{\mathcal{S}_\alpha})$ is the closed convex hull of its extreme points, i.e., $X_{\mathcal{S}_\alpha}$ are solutions to Lindenstrauss' problem \cite{Lin-Klee}, different from the example found by De Bernardi \cite{DeB-Israel}.

In \cite{AronLohSu-PAMS} the authors prove that a space $X$ has the $\lambda$-property if and only if for each $x \in Ba(X)$ there is a sequence of non-negative scalars $(\lambda_i)_{i=1}^\infty$ and a sequence of extreme points $(e_i)_{i=1}^\infty$ with $\sum_{i=1}^\infty \lambda_i = 1$ and $x= \sum_{i=1}^\infty \lambda_i e_i$. This property is called the convex series representation property (CSRP). Therefore we know that for each countable non-zero $\alpha$ the space $X_{\mathcal{S}_\alpha}$ has the  convex series representation property (CSRP) (it also easy to modify the proof of Theorem \ref{lambda} that the space has the $\lambda$-property to verify the CSRP). It follows that for non-zero countable $\alpha$,  $Ba(X_{\mathcal{S}_\alpha})$ is the closed convex hull of its extreme points.
 
In his blog \cite{Go-blog}, Gowers states (but does not prove) that for regular family of finite sets $\mathcal{F}$ containing the singletons, the set of extreme points  $Ba(X_{\mathcal{F}}^*)$ are elements of the form $\sum_{i \in F} \pm e^*_i$ where $F \in \mathcal{F}^{MAX}$. We use this characterization to prove that $X_{\mathcal{S}_\alpha}$ is (V)-polyhedral for each countable $\alpha$. The first step in establishing Gowers' claim, however, is to prove the following structure theorem for  $Ba(X^*_{\mathcal{F}})$ which we believe is of independent interest. 

\begin{proposition}
Let $\mathcal{F}$ be a regular family of finite subsets of $\mathbb{N}$ containing the singletons. Then
\begin{equation}
Ba(X^*_\mathcal{F})=\{\sum_{i=1}^\infty \lambda_i f_i : \lambda_i \geqslant 0, \,\sum_{i=1}^\infty \lambda_i \leqslant 1,\,f_i \in W_{\mathcal{F}}\}. \label{the goodness}
\end{equation}
Here $W_{\mathcal{F}}= \{\sum_{i \in F} \pm e^*_i : F \in \mathcal{F}\}$ is the norming set of $X_{\mathcal{F}}$.
\label{inf convex}
\end{proposition}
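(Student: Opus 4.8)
The plan is to establish the two inclusions in \eqref{the goodness} separately. The inclusion ``$\supseteq$'' is routine: if $g=\sum_{i=1}^{\infty}\lambda_i f_i$ with $\lambda_i\geqslant 0$, $\sum_i\lambda_i\leqslant 1$ and $f_i\in W_{\mathcal{F}}$, then since $\|f_i\|_{X_{\mathcal{F}}^*}\leqslant 1$ the partial sums form a Cauchy sequence in $X_{\mathcal{F}}^*$, so the series converges in norm and $\|g\|_{X_{\mathcal{F}}^*}\leqslant\sum_i\lambda_i\leqslant 1$, i.e.\ $g\in Ba(X_{\mathcal{F}}^*)$.

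The content is in the reverse inclusion, and the observation that makes everything work is that $W_{\mathcal{F}}$ is a \emph{countable}, weak$^*$-compact subset of $Ba(X_{\mathcal{F}}^*)$. First I would check compactness: identifying $\sum_{i\in F}\varepsilon_i e_i^*$ with the element of $\{-1,0,1\}^{\mathbb{N}}$ supported on $F$ with signs $\varepsilon_i$, one has $W_{\mathcal{F}}=\{\varphi\in\{-1,0,1\}^{\mathbb{N}}:\supp\varphi\in\mathcal{F}\}$, and since $\mathcal{F}$ is compact, hence closed in $2^{\mathbb{N}}$, a short pointwise-limit argument shows this set is closed in $\{-1,0,1\}^{\mathbb{N}}$, hence compact in the product topology. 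On the bounded set $W_{\mathcal{F}}$ the weak$^*$ topology of $X_{\mathcal{F}}^*$ coincides with the product topology (because $c_{00}$ is dense in $X_{\mathcal{F}}$), so $W_{\mathcal{F}}$ is weak$^*$-compact; it is plainly countable and symmetric, and sits inside $Ba(X_{\mathcal{F}}^*)$. Since $W_{\mathcal{F}}$ is a symmetric norming set ($\sup_{f\in W_{\mathcal{F}}}f(x)=\|x\|_{X_{\mathcal{F}}}$), the bipolar theorem gives $Ba(X_{\mathcal{F}}^*)=\overline{\operatorname{conv}}^{\,w^*}(W_{\mathcal{F}})$.

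With this in hand I would finish via the barycenter map. Let $\mathcal{M}$ be the set of positive Borel measures $\mu$ on the compact space $W_{\mathcal{F}}$ with $\mu(W_{\mathcal{F}})\leqslant 1$; it is a weak$^*$-compact convex subset of $C(W_{\mathcal{F}})^*$ (Banach--Alaoglu, plus weak$^*$-closedness of the positive cone and of $\{\langle\mu,\mathbf 1\rangle\leqslant 1\}$). Define $\beta\colon\mathcal{M}\to X_{\mathcal{F}}^*$ by $\beta(\mu)(x)=\int_{W_{\mathcal{F}}}f(x)\,d\mu(f)$; for each $x\in X_{\mathcal{F}}$ the integrand $f\mapsto f(x)$ belongs to $C(W_{\mathcal{F}})$, so $\beta$ is well defined, linear, maps into $Ba(X_{\mathcal{F}}^*)$, and is weak$^*$-to-weak$^*$ continuous. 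Hence $\beta(\mathcal{M})$ is a weak$^*$-compact convex subset of $Ba(X_{\mathcal{F}}^*)$ containing $\{\beta(\delta_f):f\in W_{\mathcal{F}}\}=W_{\mathcal{F}}$, and therefore, by the previous paragraph, $\beta(\mathcal{M})=Ba(X_{\mathcal{F}}^*)$. Finally, because $W_{\mathcal{F}}$ is countable, every $\mu\in\mathcal{M}$ is purely atomic, $\mu=\sum_{f\in W_{\mathcal{F}}}\mu(\{f\})\delta_f$, so $\beta(\mu)=\sum_{f\in W_{\mathcal{F}}}\mu(\{f\})\,f$ (a norm-convergent series), which is exactly an element of the right-hand side of \eqref{the goodness}; conversely every $\sum_i\lambda_i f_i$ in that right-hand side equals $\beta\big(\sum_i\lambda_i\delta_{f_i}\big)$. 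Thus the right-hand side of \eqref{the goodness} equals $\beta(\mathcal{M})=Ba(X_{\mathcal{F}}^*)$, which is the claim.

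The step I expect to need genuine care, rather than bookkeeping, is the weak$^*$-compactness of $W_{\mathcal{F}}$, which is exactly where compactness of the family $\mathcal{F}$ is used. The a priori worry in this kind of statement --- that one would have to upgrade an arbitrary representing measure on $W_{\mathcal{F}}$ to an atomic one --- never actually arises, since on the countable space $W_{\mathcal{F}}$ every finite measure is already atomic, and this is what turns the integral representation into a convex series.
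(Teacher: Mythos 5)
Your proof is correct, and it takes a genuinely different route from the paper's. The paper embeds $X_{\mathcal{F}}$ isometrically into $C(K_{\mathcal{F}})$, where $K_{\mathcal{F}}=\{\sigma\in\{-1,0,1\}^{\mathbb{N}}:\supp\sigma\in\mathcal{F}\}$, takes a Hahn--Banach extension of a given $f\in Ba(X_{\mathcal{F}}^*)$ to a Radon measure $\mu$ on the countable compact space $K_{\mathcal{F}}$, writes $\mu$ as an $\ell_1$-sum of Dirac functionals, and pushes this decomposition forward through the adjoint $i^*$ (using a small lemma identifying $i^*$ of the extension with $f$), noting $i^*(\delta_\sigma)\in W_{\mathcal{F}}$. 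You instead work intrinsically in $X_{\mathcal{F}}^*$: you show $W_{\mathcal{F}}$ is a countable weak$^*$-compact symmetric norming set (which is essentially the same use of compactness of $\mathcal{F}$, since your $W_{\mathcal{F}}$ with the weak$^*$ topology is homeomorphic to the paper's $K_{\mathcal{F}}$), invoke the bipolar theorem to get $Ba(X_{\mathcal{F}}^*)=\overline{\operatorname{conv}}^{\,w^*}(W_{\mathcal{F}})$, and then use a barycenter map on measures over $W_{\mathcal{F}}$ itself, with atomicity of measures on countable compacta turning the integral representation into a convex series. Both arguments hinge on the same two facts (compactness of $\mathcal{F}$ and atomicity of measures on countable compact spaces); the paper's version is a bit shorter because it outsources the representation to the standard description of $C(K)^*$ and needs only the Hahn--Banach extension lemma, while yours avoids the extension/quotient-map bookkeeping at the cost of the bipolar theorem plus the continuity and compactness checks for the barycenter map $\beta$, and it also records the easy inclusion ``$\supseteq$'' explicitly, which the paper leaves implicit.
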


Before we prove Proposition \ref{inf convex}, we need the following easy lemma  whose proof we include for completeness sake. 

\begin{lemma}
Let $X$ and $Z$ be Banach spaces $i:X \to Z$ to be an isometry, and $j:X \to i(X)$ defined by $j=i$. Let $x^*\in X^*$. If $z^* \in Z^*$ is a Hahn-Banach extension of $(j^*)^{-1}(x^*)$ then $i^*(z^*)=x^*$
\label{thanks Banach}
\end{lemma}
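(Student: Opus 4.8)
The plan is to unwind the definitions of the two adjoint maps and verify the asserted identity pointwise on $X$; there is essentially no real obstacle here beyond bookkeeping with the maps involved. First I would observe that since $i$ is an isometry, its corestriction $j\colon X\to i(X)$ is a surjective linear isometry, so $j^*\colon (i(X))^*\to X^*$ is an isometric isomorphism. In particular $(j^*)^{-1}(x^*)$ is a well-defined bounded linear functional on the subspace $i(X)\subseteq Z$, so that it makes sense to speak of a Hahn--Banach extension $z^*\in Z^*$ of it (with $\|z^*\|=\|(j^*)^{-1}(x^*)\|=\|x^*\|$).

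Next, fix $x\in X$. By the definition of the adjoint, $i^*(z^*)(x)=z^*(i(x))$. Since $i(x)=j(x)$ as an element of $Z$ and $j(x)\in i(X)$, and since $z^*$ agrees with $(j^*)^{-1}(x^*)$ on the subspace $i(X)$, we obtain
\[
i^*(z^*)(x)=z^*\bigl(j(x)\bigr)=\bigl((j^*)^{-1}(x^*)\bigr)\bigl(j(x)\bigr).
\]
Writing $g:=(j^*)^{-1}(x^*)\in (i(X))^*$, the definition of $j^*$ gives $g\bigl(j(x)\bigr)=(j^*g)(x)=x^*(x)$, because $j^*g=j^*\bigl((j^*)^{-1}(x^*)\bigr)=x^*$. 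Combining these, $i^*(z^*)(x)=x^*(x)$ for every $x\in X$, i.e.\ $i^*(z^*)=x^*$.

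The only point that requires any care is to keep the three maps straight --- $i\colon X\to Z$, its corestriction $j\colon X\to i(X)$, and the inclusion $i(X)\hookrightarrow Z$ --- and not to conflate $j^*$, which acts on $(i(X))^*$, with $i^*$, which acts on $Z^*$. The content of the lemma is just that a Hahn--Banach extension does not change the action of a functional on the subspace from which it was extended, transported across the isometric identification $j$.
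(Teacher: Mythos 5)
Your proof is correct and is essentially identical to the paper's: both fix $x\in X$ and compute $i^*(z^*)(x)=z^*(ix)=z^*(jx)=\bigl((j^*)^{-1}x^*\bigr)(jx)=x^*(x)$. Your added remark that $j^*$ is an isometric isomorphism (so the extension exists and is norm-preserving) is a harmless elaboration of what the paper leaves implicit.
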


\begin{proof}
Fix the spaces $X,Y$, the operators $i,j$, and the functionals $x^*$ and $z^*$ as in the statement of the lemma. We wish to show that $i^*z^*(x)=x^*(x)$ for each $x \in X$. Let $x \in X$. Then
$$(i^*z^*)(x)=z^*(ix)=z^*(jx)= ((j^*)^{-1}x^*)(jx)=x^*(x).$$
This is the desired result.
\end{proof}

\begin{proof}[Proof of Proposition \ref{inf convex}]

Let $\mathcal{F}$ be a compact, spreading, and hereditary family of finite subsets of $\mathbb{N}$ containing the singletons and let $X_{\mathcal{F}}$ be the corresponding  combinatorial space. Consider the following compact subset of $\{-1,0,1\}^\mathbb{N}$.
\begin{equation}
    K_{\mathcal{F}}=\{\sigma \in\{-1,0,1\}^\mathbb{N} : \supp\,\sigma \in \mathcal{F}\}.
\end{equation}
Define the isometric embedding $i:X_{\mathcal{F}} \to C(K_{\mathcal{F}})$ by $i(x)(\sigma) = \sum_{k} x(k)\sigma(k)$ and let $j:X_{\mathcal{F}} \to i(X_{\mathcal{F}})$ be defined by $j=i$.

Then $i^*:C(K_{\mathcal{F}})^* \to X_{\mathcal{F}^*}$ is a quotient  map.
Recall that $C(K_{\mathcal{F}})^*$ can be identified with the Radon measures on $K$, $\mathcal{M}(K_{\mathcal{F}})$. Since $K_\mathcal{F}$ is countable, each $\mu \in \mathcal{M}(K_{\mathcal{F}})$ is in the closed span of the Dirac functionals $\delta_\sigma$ (defined by $\delta_\sigma(f)=f(\sigma)$). That is,

\begin{equation}
    \mu = \sum_{\sigma \in K_\mathcal{F}} |\mu(\{\sigma\})| \sign(\mu(\{\sigma\})) \delta_\sigma
\end{equation}
and $\|\mu\|= \sum_{\sigma \in K_\mathcal{F}} |\mu(\{\sigma\})|$.  It is a well-known fact \cite[Exercise 4.1 page 98]{AlbK-book} that $$E(\mathcal{M}(K))=\{\varepsilon \delta_\sigma : \sigma \in K_\mathcal{F}, \varepsilon \in \{-1,1\}\}$$   
which implies that each $\mu \in Ba(X_{\mathcal{F}})$ can be written as a (possibly infinite) convex combination of extreme points (that is, $\mathcal{M}(K_\mathcal{F})$ has the CSRP).

Let $f \in Ba(X^*_\mathcal{F})$ and consider a Hahn-Banach extension $\mu$ of $(j^*)^{-1}(f)$. By Lemma \ref{thanks Banach} we have $i^*(\mu)=f$. Since $\mu \in Ba(\mathcal{M}(K))$ we have  
\begin{equation}
    \mu = \sum_{\sigma \in K_\mathcal{F}} |\mu(\{\sigma\})|\sign(\mu(\{\sigma\}))\delta_\sigma
\end{equation}
and $\sum_{\sigma \in K_\mathcal{F}} |\mu(\{\sigma\})| \leqslant 1$. Note that $i^*(\delta_\sigma)=\sum_{k \in \supp\,\sigma} \sigma(k)e^*_k=:f_\sigma \in W_\mathcal{F}$. Let $\lambda_\sigma = |\mu(\{\sigma\})|$ and $\varepsilon_\sigma = \sign(\mu(\{\sigma\}))$ and observe that
$$f=i^*(\mu)=\sum_{\sigma \in K_\mathcal{F}} |\mu(\{\sigma\})|\sign(\mu(\{\sigma\}))i^*(\delta_\sigma)=\sum_{\sigma \in K_\mathcal{F}} \lambda_\sigma\varepsilon_\sigma f_\sigma.$$
As $K_{\mathcal{F}}$ is countable, this proves the desired equality.
\end{proof}


\begin{proposition}
\label{prop54}
Let $\alpha$ be a countable ordinal. Then
\begin{enumerate}
    \item $E(X^*_{\mathcal{S}_\alpha})=\{\sum_{i \in F} \varepsilon_i e^*_i : F \in \mathcal{S}_\alpha^{MAX},\varepsilon_i \in \{\pm 1\}\}$
    \item $X^*_{\mathcal{S}_\alpha}$ has the $\lambda$-property.
\end{enumerate}
\end{proposition}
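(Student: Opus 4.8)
The plan is to derive both statements from Proposition~\ref{inf convex}, which already identifies $Ba(X^*_{\mathcal{S}_\alpha})$ with the set of convex series $\sum_i\lambda_i f_i$ (with $\lambda_i\geqslant0$, $\sum_i\lambda_i\leqslant1$, $f_i\in W_{\mathcal{S}_\alpha}$), where $W_{\mathcal{S}_\alpha}=\{\sum_{i\in F}\pm e^*_i:F\in\mathcal{S}_\alpha\}$. For the inclusion $E(X^*_{\mathcal{S}_\alpha})\subseteq\{\sum_{i\in F}\varepsilon_ie^*_i:F\in\mathcal{S}^{MAX}_\alpha\}$ in item~(1): an extreme point $f$ of the ball has $\|f\|=1$, so in a representation $f=\sum_i\lambda_if_i$ from Proposition~\ref{inf convex} the estimate $1=\|f\|\leqslant\sum_i\lambda_i\leqslant1$ forces $\sum_i\lambda_i=1$; discarding null weights and peeling off a single term $\lambda_1f_1$ (trivial if it is the only term), the normalized remainder $(1-\lambda_1)^{-1}\sum_{i\geqslant2}\lambda_if_i$ lies in $Ba(X^*_{\mathcal{S}_\alpha})$, so extremality forces $f=f_1=\sum_{i\in F}\varepsilon_ie^*_i$ for a single $F\in\mathcal{S}_\alpha$ (then $F=\supp f$). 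If $F$ were non-maximal, I would pick $j>\max F$ with $F\cup\{j\}\in\mathcal{S}_\alpha$ and write $f=\tfrac12(f+e^*_j)+\tfrac12(f-e^*_j)$ with both summands in $W_{\mathcal{S}_\alpha}\subseteq Ba(X^*_{\mathcal{S}_\alpha})$, contradicting extremality; hence $F\in\mathcal{S}^{MAX}_\alpha$.

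For the reverse inclusion I would fix $F\in\mathcal{S}^{MAX}_\alpha$ and $f_F=\sum_{i\in F}\varepsilon_ie^*_i$ and suppose $f_F=\tfrac12(g+h)$ with $g,h\in Ba(X^*_{\mathcal{S}_\alpha})$. Evaluating at $x_F=\sum_{i\in F}\varepsilon_ie_i$, whose $X_{\mathcal{S}_\alpha}$-norm equals $|F|$ (witnessed by $F$ itself), gives $g(x_F)=h(x_F)=|F|$; since $g(x_F)=\sum_{i\in F}\varepsilon_ig(e_i)$ is a sum of $|F|$ terms each at most $|g(e_i)|\leqslant\|g\|\leqslant1$, every term must equal $1$, so $g(e_i)=\varepsilon_i$ for all $i\in F$, and likewise $h(e_i)=\varepsilon_i$. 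To kill the coordinates outside $F$ I would use maximality: for $j\notin F$ we have $F\cup\{j\}\notin\mathcal{S}_\alpha$, so $\|x_F+\delta e_j\|_{X_{\mathcal{S}_\alpha}}=|F|$ for any $\delta\in\{\pm1\}$; were $g(e_j)\neq0$, taking $\delta=\sign(g(e_j))$ would yield $g(x_F+\delta e_j)=|F|+|g(e_j)|>|F|$, contradicting $\|g\|\leqslant1$. Hence $g$ and $h$ agree with $f_F$ on every basis vector, so $g=h=f_F$ and $f_F\in E(X^*_{\mathcal{S}_\alpha})$. The one ingredient I am leaning on is that a maximal Schreier set admits no proper extension inside $\mathcal{S}_\alpha$ (for every $j\notin F$, not merely for $j>\max F$); this is the real substance of ``maximal'' here, it is where the spreading/hereditary structure of $\mathcal{S}_\alpha$ is used, and I expect it to be the only delicate point of item~(1).

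For item~(2) it suffices, as noted just before Theorem~\ref{lambda}, to produce for each $f\in S(X^*_{\mathcal{S}_\alpha})$ a triple $(e,y,\lambda)\in E(X^*_{\mathcal{S}_\alpha})\times Ba(X^*_{\mathcal{S}_\alpha})\times(0,1]$ with $f\sim(e,y,\lambda)$. I would write $f=\sum_i\lambda_if_i$ as in Proposition~\ref{inf convex} with $\sum_i\lambda_i=1$, assume WLOG $\lambda_1>0$ with $f_1=\sum_{k\in F_1}\varepsilon^{(1)}_ke^*_k$, and---using that $\mathcal{S}_\alpha$ is compact, so every member is contained in a maximal one---choose $G_1\in\mathcal{S}^{MAX}_\alpha$ with $F_1\subseteq G_1$. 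Setting $e=\sum_{k\in F_1}\varepsilon^{(1)}_ke^*_k+\sum_{k\in G_1\setminus F_1}e^*_k$ and $e'=\sum_{k\in F_1}\varepsilon^{(1)}_ke^*_k-\sum_{k\in G_1\setminus F_1}e^*_k$, both have support $G_1\in\mathcal{S}^{MAX}_\alpha$, hence $e,e'\in E(X^*_{\mathcal{S}_\alpha})$ by item~(1), and $f_1=\tfrac12(e+e')$. Then $f-\tfrac{\lambda_1}{2}e=\tfrac{\lambda_1}{2}e'+\sum_{i\geqslant2}\lambda_if_i$ has norm at most $\tfrac{\lambda_1}{2}+\sum_{i\geqslant2}\lambda_i=1-\tfrac{\lambda_1}{2}$, so $y:=(1-\tfrac{\lambda_1}{2})^{-1}(f-\tfrac{\lambda_1}{2}e)\in Ba(X^*_{\mathcal{S}_\alpha})$ and $f=\tfrac{\lambda_1}{2}e+(1-\tfrac{\lambda_1}{2})y\sim(e,y,\lambda_1/2)$. (Equivalently, splitting every $f_i$ in this way writes $f$ as a convex series of extreme points and one invokes the CSRP characterization of the $\lambda$-property from~\cite{AronLohSu-PAMS}.) Item~(2) is essentially bookkeeping once item~(1) and Proposition~\ref{inf convex} are in hand, so I anticipate no genuine obstacle there.
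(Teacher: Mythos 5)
Your proposal is correct and follows essentially the same route as the paper: both rest on Proposition~\ref{inf convex} to reduce an extreme point to a single element of $W_{\mathcal{S}_\alpha}$, use the splitting $f=\tfrac12(f+e^*_j)+\tfrac12(f-e^*_j)$ to rule out non-maximal supports, verify extremality of maximal-support functionals by an evaluation argument that hinges on the fact that a maximal $\mathcal{S}_\alpha$-set admits no extension by \emph{any} $j\notin F$ (a consequence of spreading which the paper also uses without proof), and obtain item~(2) by completing supports to maximal sets and averaging the two sign-extensions. The only differences are cosmetic: you test against $\sum_{i\in F}\varepsilon_ie_i$ with a full-size perturbation where the paper uses a normalized vector and a small perturbation, and you peel off one term of the convex series where the paper splits every term and invokes the CSRP.
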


\begin{proof}
Let $\alpha$ be a countable ordinal and $f \in E(X^*_{\mathcal{S}_\alpha})$. Suppose that $f \not \in \{ \sum_{i \in F} \varepsilon_i e^*_i: F \in \mathcal{S}_\alpha^{MAX}\mbox{ and }\varepsilon_i \in \{-1,1\}\}$. We will consider two cases. First we assume that $f \in W_{\mathcal{S}_\alpha}$, then $f = \sum_{i \in F} \varepsilon_i e_i^*$, with $F \in \mathcal{S}_\alpha \setminus \mathcal{S}_\alpha^{MAX}$ and $\varepsilon_i \in \{\pm 1\}$ for every $i \in F$. Let $i_0 \in \nn \setminus F$ such that $F \cup \{i_0\} \in \mathcal{S}_\alpha$. Then,
    $$f = \dfrac{1}{2} [(f+e_{i_0}^*) + (f-e_{i_0}^*)].$$
   This contradicts $f \in E(X^*_{\mathcal{S}_\alpha})$, since $f \pm e_{i_0}^* \in Ba(X_{\mathcal{S}_\alpha}^*)$.

Now assume that $f = \sum_{i=1}^\infty \lambda_i f_i$ (non-trivially), with $\lambda_i \geqslant 0$,  $\sum_{i=1}^\infty \lambda_i \leqslant 1$, and $f_i \in W_{\mathcal{S}_\alpha}$. Let $\lambda = \lambda_1$ and $\lambda_1 \in (0,1)$. Then,
$$f = \lambda f_1 + (1-\lambda)\bigg(\dfrac{\lambda_2}{1-\lambda}f_2 + \dfrac{\lambda_3}{1-\lambda} f_3 + \dots\bigg)$$
and $\bigg(\dfrac{\lambda_2}{1-\lambda}f_2 + \dfrac{\lambda_3}{1-\lambda} f_3 + \dots\bigg) \in Ba(X_{\mathcal{S}_\alpha}^*)$, since $\sum_{i=2}^\infty \dfrac{\lambda_i}{1-\lambda} = 1$. Hence, in this case we also have $f \not \in E(X_{\mathcal{S}_\alpha}^*)$. This proves that $E(X^*_{\mathcal{S}_\alpha}) \subseteq \{ \sum_{i \in F} \varepsilon_i e^*_i: F \in \mathcal{S}_\alpha^{MAX}\mbox{ and }\varepsilon_i \in \{-1,1\}\}.$

On the other hand, let $f = \sum_{i \in F} \varepsilon_i e^*_i$ with $F \in \mathcal{S}_\alpha^{MAX}$ and $\varepsilon_i \in \{-1,1\}$. Suppose that $f \not \in E(X^*_{\mathcal{S}_\alpha})$. Let $g, h \in S(X_\alpha^*)$ such that $f = \dfrac{g+h}{2}$. We claim that
$$g(e_i) = h(e_i) = f(e_i), \mbox{ for every } i \in F.$$
In fact, if we had, for example, $\varepsilon_i = 1$ and $g(e_i) > h(e_i)$ for some $i \in F$, then $g(e_i) = 1+ \eta$, $\eta > 0$, and hence $g \not \in S(X_{\mathcal{S}_\alpha}^*)$.

Suppose now that $g(e_{i_0}) \neq 0$ for some $i_0 \not \in F$. Let $x = \sum_{i \in F} a_i e_i$ such that $f(x) = \sum_{i \in F} |a_i| = 1$ and $|a_i| \neq 0$ for every $i \in F$. Let $\eta = \min \{|a_i|: i \in F\}$ and let $y = x + \dfrac{\eta}{2}e_{i_0}$. Notice that $f(y) = f(x) = 1$ and $\|y\| \geqslant 1$. In fact, we will show that $\|y\| = 1$. To prove this, let $G \in \mathcal{S}_\alpha$.

In the first case, if $i_0 \in G$, then $\sum_{i \in G} |y(i)| \leqslant \sum_{i \in G \cap F} |a_i| + |y(i_0)|$. However, $G \cap F \subsetneq F$, because otherwise we would have $F \cup \{i_0\} \in \mathcal{S}_\alpha$. Thus,
$$\sum_{i \in G} |y(i)| < \sum_{i \in G \cap F} |a_i| + |y(i_0)| < 1 - \eta + \dfrac{\eta}{2} = 1 - \dfrac{\eta}{2}.$$
In the second case if $i_0 \not \in G$, then $\sum_{i \in G} |y(i)| \leqslant \sum_{i \in F} |a_i| = 1$. 

Hence, $\|y\| \leqslant 1$ which implies that $\|y\| = 1$. However,
$$g(y) \geqslant f(x) + g(e_{i_0}) = 1 + \dfrac{\eta}{2},$$
which contradicts the fact that $\|g\| = 1$. Therefore, $f \in E(X_{\mathcal{S}_\alpha}^*)$.

For the proof of item (2) we will show that $X_{\mathcal{S}_\alpha}$ has the CSRP (which, as we noted, is equivalent to having the $\lambda$-property). First note the following: Suppose that $f \in W_{\mathcal{S}_\alpha}$ and $f=\sum_{i \in F} \varepsilon_i e_i^*$ for some $F \in \mathcal{S}_\alpha \setminus \mathcal{S}^{MAX}_\alpha$ and unimodular scalars $\varepsilon_i$. That is $f \in W_{\mathcal{S}_\alpha} \setminus E(X^*_{\mathcal{S}_\alpha})$. Then we can find non-empty set $G$ so that $F\cap G =\emptyset$ and $F\cup G \in \mathcal{S}^{MAX}_\alpha$. 
Let 
$$ f_1 = f + \sum_{i \in G} e^*_i \mbox{ and } f_2 = f - \sum_{i \in G} e^*_i$$
Then $f_1,f_2 \in E(X^*_{\mathcal{S}_\alpha})$ and $f = \frac{1}{2}(f_1 + f_2)$. Therefore since each $f \in Ba(X_{\mathcal{S}_\alpha})$ can be written as an infinite convex combination $f=\sum_{i=1}^\infty \lambda_i g_i$ for $g_i \in W_{\mathcal{S}_\alpha}$ and $g_i = \frac{1}{2}(g_{i,1}+g_{i,2})$ we have
$$ f=\frac{1}{2}\sum_{i=1}^\infty \lambda_i g_{i,1} + \frac{1}{2}\sum_{i=1}^\infty \lambda_i g_{i,2}$$
with $g_{i,j} \in E(X_{\mathcal{S}_\alpha})$ for $i \in \mathbb{N}$ and $j=1,2$. This is the desired result.
\end{proof}

\begin{remark}
The analogous proposition replacing $\mathcal{S}_\alpha$ with a regular family $\mathcal{F}$ also holds with only minor changes to the proof. We choose not to consider this level of generality in order to say consistent with the main objectives of the current paper.
\end{remark}

Finally we can show that $X_{\mathcal{S}_\alpha}$ is (V)-polyhedral space.

\begin{theorem}
\label{th53}
For each countable $\alpha$ the space $X_{\mathcal{S}_\alpha}$ is a $(V)$-polyhedral space. 
\end{theorem}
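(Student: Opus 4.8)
I want to show that for each countable $\alpha$,
$$
\sup\{f(x): f \in E(X^*_{\mathcal{S}_\alpha})\setminus D(x)\} < 1 \quad \text{for all } x \in S(X_{\mathcal{S}_\alpha}).
$$
By Proposition \ref{prop54}(1) we know exactly what $E(X^*_{\mathcal{S}_\alpha})$ is: the functionals $f_{F,\varepsilon} = \sum_{i \in F} \varepsilon_i e_i^*$ with $F \in \mathcal{S}_\alpha^{MAX}$ and $\varepsilon_i \in \{\pm 1\}$. The first reduction is to note that it suffices to treat $x \in S(X_{\mathcal{S}_\alpha})$ with finite support: a general $x$ is approximated in norm by its truncations $x_N = \sum_{i \le N} x(i) e_i$, and since each $f \in E(X^*_{\mathcal{S}_\alpha})$ has norm one, $|f(x) - f(x_N)| \le \|x - x_N\|$; combined with a uniform gap obtained for $x_N$ and the fact that $D(x_N)$ and $D(x)$ are close enough, one gets the estimate for $x$. (I would actually be a little careful here and instead argue directly that for fixed $x$, one picks $N$ large so that $\|x - x_N\| < \delta$ and reduces to a gap bound for $x_N$ that is at least $2\delta$; the point is that the supremum is an $x$-by-$x$ statement, not a uniform one, so no uniformity in $x$ is needed.)

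**Main step: the finitely supported case.** So fix $x \in S(X_{\mathcal{S}_\alpha})$ with $\operatorname{supp} x \subseteq \{1,\dots,N\}$. Let $f = f_{F,\varepsilon} \in E(X^*_{\mathcal{S}_\alpha})$ with $f \notin D(x)$, i.e. $f(x) < 1$. I want a bound $f(x) \le 1 - \eta_x$ where $\eta_x > 0$ depends only on $x$. Now $f(x) = \sum_{i \in F} \varepsilon_i x(i) = \sum_{i \in F \cap \{1,\dots,N\}} \varepsilon_i x(i)$, so $f(x)$ depends only on the finite set $F \cap \{1,\dots,N\}$ and the signs $\varepsilon_i$ on it. There are only finitely many such pairs $(F \cap \{1,\dots,N\}, \varepsilon|_{F \cap \{1,\dots,N\}})$. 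For each one either it extends to some $f' \in E(X^*_{\mathcal{S}_\alpha})$ with $f'(x) = f(x) = 1$ — meaning $F \cap \{1,\dots,N\}$, with those signs, is an "achieving" configuration — or $f(x) < 1$ strictly, and since there are finitely many configurations, the strict ones are bounded away from $1$ by some $\eta_x > 0$. The one subtlety: a configuration $(F \cap \{1,\dots,N\}, \varepsilon)$ might itself satisfy $\sum \varepsilon_i x(i) = 1$ yet still give $f \notin D(x)$? No — if $\sum_{i \in F \cap \{1,\dots,N\}}\varepsilon_i x(i) = 1$ then $f(x) = 1$, contradiction; so every $f \notin D(x)$ has $f(x) < 1$ via a configuration with strictly-less-than-$1$ value, hence $\le 1 - \eta_x$. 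This gives $\sup\{f(x) : f \in E(X^*_{\mathcal{S}_\alpha})\setminus D(x)\} \le 1 - \eta_x < 1$.

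**Where the real work is.** The argument above is essentially "the norm of a finitely supported $x$ is attained and there are finitely many relevant functionals, so there's a gap." This is basically the content of Proposition \ref{lots of items}(2) (the $\varepsilon$-gap), reused for $X_{\mathcal{S}_\alpha}$ itself and phrased in dual terms; indeed for finitely supported $x$, a functional $f_{F,\varepsilon} \notin D(x)$ produces a set $F' = \{i \in F : x(i) \ne 0\} \in \mathcal{S}_\alpha$ with $\sum_{i \in F'} |x(i)| \ge f(x)$ but $\sum_{i \in F'}|x(i)| < 1$, whence $f(x) \le \sum_{i \in F'}|x(i)| < 1 - \varepsilon_x$ by the $\varepsilon$-gap of Proposition \ref{lots of items}(2). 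So the clean proof is: reduce to finitely supported $x$; apply the $\varepsilon$-gap; done. The main obstacle — and the reason the finite-support reduction must be handled with a little care — is that the $\varepsilon$-gap $\varepsilon_x$ is genuinely $x$-dependent and can degenerate along a sequence $x_N \to x$, so one cannot naively pass to the limit and must instead fix $x$, choose the truncation level $N$ after seeing how close $D(x)$ functionals must be, and only then invoke the gap for that single $x_N$. Once that bookkeeping is arranged the proof is short.
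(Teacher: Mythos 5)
There is a genuine gap, and it sits exactly where you flagged it: the passage from finitely supported $x$ to general $x \in S(X_{\mathcal{S}_\alpha})$. Your finite-support argument (only finitely many configurations $\bigl(F\cap\{1,\dots,N\},\varepsilon|_{F\cap\{1,\dots,N\}}\bigr)$, hence a gap $\eta_x>0$) is fine, but the truncation reduction is never carried out and, as sketched, is circular: you must choose the truncation level $N$ before you know the gap $\eta_{x_N}$ of the truncated vector, and nothing you prove prevents $\eta_{x_N}\leqslant 2\|x-x_N\|$ for every $N$. Concretely, a functional $f\notin D(x)$ may satisfy $f(x_N)<1$ yet $f(x_N)>1-\|x-x_N\|$, and then the estimate $f(x)\leqslant f(x_N)+\|x-x_N\|$ gives nothing; ruling out such $f$ is essentially the whole content of the theorem for infinitely supported $x$, and your bookkeeping remark (``choose $N$ after seeing how close $D(x)$ functionals must be'') is not an argument. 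The second problem is that your proposed ``clean proof'' is incorrect as stated: from $f=\sum_{i\in F}\varepsilon_i e_i^*\notin D(x)$ it does not follow that $F'=\{i\in F: x(i)\neq 0\}$ satisfies $\sum_{i\in F'}|x(i)|<1$. For instance, in $X_{\mathcal{S}_1}$ take $x=\tfrac12(e_2+e_3)$ and $f=e_2^*-e_3^*$ (here $F=\{2,3\}\in\mathcal{S}_1^{MAX}$): then $f(x)=0<1$ while $\sum_{i\in F'}|x(i)|=1$, so the $\varepsilon$-gap of Proposition \ref{lots of items}(2) cannot be applied to $F'$.

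Both problems disappear if you argue as the paper does, with no truncation at all: Proposition \ref{lots of items}(2) holds for \emph{every} $x\in S(X_{\mathcal{S}_\alpha})$, not only finitely supported ones. Given $f=\sum_{i\in F}\varepsilon_i e_i^*$ with $F\in\mathcal{S}_\alpha^{MAX}$ and $f(x)<1$, split $F$ into $G=\{i\in F:\varepsilon_i=\sign(x(i))\}$ and $H=\{i\in F:\varepsilon_i=-\sign(x(i))\}$. If $H=\emptyset$ then $\sum_{i\in G}|x(i)|=f(x)<1$; if $H\neq\emptyset$ then $\sum_{i\in G}|x(i)|<\sum_{i\in F}|x(i)|\leqslant 1$. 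Either way $G\notin\mathcal{A}_x$, so the $\varepsilon$-gap gives $\sum_{i\in G}|x(i)|\leqslant 1-\varepsilon_x$, and hence $f(x)\leqslant\sum_{i\in G}|x(i)|-\sum_{i\in H}|x(i)|\leqslant 1-\varepsilon_x$, a bound depending only on $x$. The sign-split through $G$ (rather than your $F'$), applied directly to the possibly infinitely supported $x$, is the missing idea.
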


\begin{proof}
Let $x \in S(X_{\mathcal{S}_\alpha})$ and $f \in E(X^*_{\mathcal{S}_\alpha})$ such that $f(x) < 1$. By Proposition \ref{prop54}, there exists $F \in \mathcal{S}_\alpha^{MAX}$ such that $f = \sum_{i \in F} \varepsilon_i e_i^*$, with $\varepsilon_i \in \{\pm 1\}$ for each $i \in F$. Let $G = \{i \in F: \varepsilon_i = \sign(x(i))\}$ and $H = \{i \in F: \varepsilon_i = - \sign(x(i))\}$.

Notice that $G$ is not a 1-set for $x$. In fact, if $H = \emptyset$, then $f(x) = \sum_{i \in G} |x(i)| < 1$. On the other hand, if $H \neq \emptyset$, then
$$\sum_{i \in G} |x(i)| <  \sum_{i \in F} |x(i)| \leqslant \|x\| = 1.$$
By Proposition \ref{lots of items}, there exists $\varepsilon_x >0 $ such that 
$\sum_{i \in G} |x(i)| \leqslant 1 - \varepsilon_x$. Hence,
$$f(x) = \sum_{i \in G} |x(i)| - \sum_{i \in H} |x(i)| \leqslant 1 - \varepsilon_x.$$
This is the desired result, since $\varepsilon_x$ depends only on $x$.
\end{proof}

\section{The Isometry Group of $X_{\mathcal{S}_n}$}
\label{section-isometry}

In this section, we will use our previous results concerning extreme points of Schreier space to exhibit the general form of the elements of $\Isom(X_{\mathcal{S}_n})$, with $n\in\mathbb{N}$. We state the main result.

\begin{theorem}
Let $n \in \mathbb{N}$ and $U\in \Isom(X_{\mathcal{S}_n})$. Then $Ue_i=\pm e_i$ for each $i \in \mathbb{N}$
\label{Isom}
\end{theorem}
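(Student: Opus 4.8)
The plan is to pass to the adjoint and combine the description of $E(X^*_{\mathcal{S}_n})$ from Proposition \ref{prop54} with a face argument. Since $U\in\Isom(X_{\mathcal{S}_n})$, the operator $U^*$ is a surjective linear isometry of $X^*_{\mathcal{S}_n}$, hence maps $E(X^*_{\mathcal{S}_n})=\{\sum_{i\in F}\varepsilon_ie_i^*:F\in\mathcal{S}_n^{MAX},\ \varepsilon_i=\pm1\}$ bijectively onto itself and commutes with $g\mapsto-g$. The first goal is $U^*e_1^*=\pm e_1^*$. For $n\geq1$ the only member of $\mathcal{S}_n$ containing $1$ is $\{1\}$, so $\pm e_1^*$ are the only extreme functionals with one-point support; moreover $X_{\mathcal{S}_n}=\mathbb{R}e_1\oplus_\infty Z_1$ (because $\|x\|=\max(|x(1)|,\|x-x(1)e_1\|)$ for every $x$), where $Z_1:=\overline{\sspan}\{e_i:i\geq2\}$, and dually $X^*_{\mathcal{S}_n}=\mathbb{R}e_1^*\oplus_1 Z_1^*$. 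Consequently $\|e_1^*-h\|=\|e_1^*\|+\|h\|=2$ for every $h\in E(X^*_{\mathcal{S}_n})\setminus\{\pm e_1^*\}$ (such an $h$ lies in $Z_1^*$, since its support is a maximal set different from $\{1\}$ and so avoids $1$), whereas for $g\in E(X^*_{\mathcal{S}_n})$ with $|\supp g|\geq2$ one can produce $h\in E(X^*_{\mathcal{S}_n})\setminus\{\pm g\}$ with $\|g-h\|<2$: take $h$ supported on a maximal set $G$ with $\min G=\max\supp g$ and agreeing with $g$ at $\max\supp g$, so that $g-h$ is a $\{-1,0,1\}$-valued functional supported on a union of two successive $\mathcal{S}_n$-blocks, whose $X^*$-norm a short estimate bounds strictly below $2$. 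Thus $\{\pm e_1^*\}$ is isometrically characterised inside $E(X^*_{\mathcal{S}_n})$, whence $U^*e_1^*=\pm e_1^*$; composing $U$ with the isometry $e_1\mapsto-e_1$, $e_i\mapsto e_i\ (i\geq2)$ if necessary, assume $U^*e_1^*=e_1^*$.

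Next, the face $F_1:=\{x\in Ba(X_{\mathcal{S}_n}):e_1^*(x)=1\}$ equals $e_1+Ba(Z_1)$, again because $\{1\}$ is the only Schreier set through $1$. Since $(U^{-1})^*e_1^*=e_1^*$ we get $UF_1=F_1$, and since $U$ and $U^{-1}$ both carry $\ker e_1^*=Z_1$ into itself, $U|_{Z_1}$ is a surjective linear isometry $U_1$ of $Z_1$. After translating $F_1$ by $-e_1$, $U|_{F_1}$ becomes a surjective affine isometry of $Ba(Z_1)$ with linear part $U_1$; an affine isometry carrying a unit ball onto itself must fix its centre, so $Ue_1=e_1$. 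Observe that $Z_1=X_{\mathcal{G}_1}$ for the regular family $\mathcal{G}_1:=\{F\in\mathcal{S}_n:\min F\geq2\}$ on $\{2,3,\dots\}$, so Proposition \ref{prop54} (in the arbitrary-regular-family version recorded in the remark following it) is available for $Z_1$ too.

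The heart of the argument — and the step I expect to be the main obstacle — is to iterate: show $U_1e_2=\pm e_2$, then $U_2e_3=\pm e_3$, and so on. The difficulty is that the base case rested on features that vanish once the first coordinate is removed: $\mathcal{G}_1$ has no singleton maximal set, and $Z_1$ has no $\ell_\infty$-summand line (indeed $\|e_i+e_j\|=2$ for all $i,j\geq2$). Thus one must recognise the new leading coordinate by a finer, still purely isometric, invariant — coming from the cardinalities of the minimal maximal sets of the relevant family and, for $n\geq2$, from the iterated block decompositions in Remark \ref{split em up}. Packaging this is presumably the role of Lemma \ref{isometry items}.

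Finally, once it is known that every $U\in\Isom(X_{\mathcal{S}_n})$ has the form $Ue_i=\theta_ie_{\sigma(i)}$ with $\theta_i=\pm1$ and $\sigma$ a permutation of $\mathbb{N}$, the proof concludes quickly. Evaluating $\|Ux\|=\|x\|$ on $x=\sum_{i\in A}e_i$ for finite $A$ gives $\sup_{F\in\mathcal{S}_n}|F\cap A|=\sup_{F\in\mathcal{S}_n}|F\cap\sigma(A)|$; taking $A\in\mathcal{S}_n$ forces $\sigma(A)\in\mathcal{S}_n$, and applying this also to $U^{-1}$ gives $\sigma(\mathcal{S}_n)=\mathcal{S}_n$. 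Then $\sigma$ preserves maximality, so $\sigma(\{1\})=\{1\}$ (the unique maximal singleton), and an induction on $m$ — at each stage using the structure of $\mathcal{S}_n^{MAX}$ (Remark \ref{split em up}) to see that once $1,\dots,m-1$ are fixed the coordinate $m$ is pinned down by the maximal sets of least cardinality through it — yields $\sigma=\mathrm{id}$. Hence $Ue_i=\pm e_i$ for every $i$.
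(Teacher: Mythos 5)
Your base case is fine, and in fact it is a nice alternative to the paper's argument: the paper identifies the hyperplane $X_1=\ker e_1^*$ by an intrinsic metric characterization (a codimension-one subspace $X$ together with a norm-one $e$ satisfying $\|e+x\|=1$ for all $x\in Ba(X)$ forces $e=\pm e_1$), whereas you identify $\pm e_1^*$ inside $E(X^*_{\mathcal{S}_n})$ via Proposition \ref{prop54} and the $\ell_\infty$-splitting $X_{\mathcal{S}_n}=\mathbb{R}e_1\oplus_\infty Z_1$; your distance-two criterion and the affine fixed-centre argument both check out (the estimate $\|g-h\|<2$ follows since $\supp g\setminus\{\max\supp g\}$ is non-maximal, hence spreading gives $(\supp g\setminus\{\max\supp g\})\cup\{j\}\in\mathcal{S}_n$ for each $j$ in the second block, which caps the dual norm at $2-1/|G'|$).

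However, the proof as a whole has a genuine gap, and you name it yourself: the inductive step, i.e. showing $Ue_k=\pm e_k$ for $k\geq 2$, is not carried out — you only observe that the features used for $e_1$ (the maximal singleton $\{1\}$, the $\ell_\infty$-summand, the face $e_1+Ba(Z_1)$) disappear in $Z_1$, and you defer to an unspecified ``finer isometric invariant.'' This is precisely where essentially all of the paper's work lies: Lemma \ref{isometry items}(ii)--(iii) (each $x_j=Ue_j$ is finitely supported, vanishes at $1$, and has a non-maximal $1$-set, while the preimages $y_j=U^{-1}e_j$ have supports moving arbitrarily far to the right), Corollary \ref{makeblocks}, and then the block-sequence construction in the proof of Theorem \ref{Isom}, where Remarks \ref{split em up} and \ref{squeeze in} are used to compare $\|x_k+\sum_i\sum_{j\in F_i}e_j\|$ with $\|e_k+\sum_i\sum_{j\in F_i}y_j\|$ and force $\max\supp x_k\leqslant k$, after which the induction hypothesis kills the coordinates $1<j<k$. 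Note also that the paper's argument is global rather than an iteration over the nested subspaces $Z_1\supset Z_2\supset\cdots$: it uses the action of $U^{-1}$ on vectors supported far to the right, which is information you cannot recover from the restricted isometry $U_1$ of $Z_1$ alone, so the obstacle you flag is structural, not merely technical. Finally, your concluding paragraph assumes exactly what is to be proved: it starts from ``once it is known that $Ue_i=\theta_ie_{\sigma(i)}$,'' i.e. that $U$ carries the basis to signed basis vectors, and only rules out a nontrivial permutation $\sigma$; establishing that diagonal-with-permutation form is the content of the theorem, so this step cannot substitute for the missing induction.
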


All the work in the section is  related to the proof of the Theorem \ref{Isom}. Let us fix $n \in\mathbb{N}$, $U\in \Isom(X_{\mathcal{S}_n})$ and the following notation throughout this section: Let $U(e_i)=x_i$ and $U(y_i)=e_i$. 

We first require the following technical lemma.

\begin{lemma}
The following hold:
\begin{itemize}
    \item [(i)] We have $Ue_1 = \pm e_1$.
    \item [(ii)] Let $j \in \mathbb{N}$ with $j \geqslant 2$. Then, $x_j \in c_{00}$, $x_j(1)=0$, and $x_j$ has a non-maximal 1-set. 
    \item [(iii)] Let $m \in \mathbb{N}$ and $j>\max\{\max \supp \,x_i: 1 \leqslant i \leqslant m\}$. Then $\min\supp\, y_j>m$.
\end{itemize}
\label{isometry items}
\end{lemma}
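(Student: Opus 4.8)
I will prove the three items in order, each using the previous ones; since the arguments for (i) and (ii) only use that $U$ is a surjective linear isometry of $X_{\mathcal{S}_n}$, I will freely apply them to $U^{-1}$ when proving (iii). \emph{Item (i).} The combinatorial key is that $\{1\}$ is the \emph{only} member of $\mathcal{S}_n$ containing the index $1$ (induction on $n$: if $1\in E_1\cup\cdots\cup E_d$ with $\{\min E_i\}_{i=1}^d\in\mathcal{S}_1$, then $1$ is the least of the $\min E_i$, forcing $d=1$ and $E_1\ni 1$, and one recurses). Consequently $X_{\mathcal{S}_n}=\mathbb{R}e_1\oplus_\infty X'$ isometrically, with $X'=\overline{\sspan}\{e_k:k\ge 2\}=\ker e_1^*$; in particular $e_1$ is a smooth point, $D(e_1)=\{e_1^*\}$, and one checks by hand that each $e_j$ ($j\ge 2$) is an extreme point of $Ba(X')$ — in any $e_j=\tfrac12(y+z)$ inside $Ba(X')$ the $j$-th coordinate is pinned by $\{j\}$ and every other nonzero coordinate $k\ge 2$ is killed by the set $\{k,j\}\in\mathcal{S}_n$ — so that $\pm e_1+e_j\in E(X_{\mathcal{S}_n})$. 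Next, by the forward implication of Theorem \ref{only reflexive} (valid at $p=1$) every $e\in E(X_{\mathcal{S}_n})$ has $|e(1)|=1$, since the index $1\le\max\supp e$ must lie in some set of $\mathcal{A}_e$ and $\{1\}$ is the only candidate. As $U$ permutes $E(X_{\mathcal{S}_n})$, the functional $h:=U^*e_1^*$ satisfies $\|h\|=1$ and $|h(e)|=1$ for all $e\in E(X_{\mathcal{S}_n})$; writing $h=a e_1^*+h'$ with $h'$ vanishing on $e_1$ and testing against $\pm e_1+e_j$ gives $|a+h'(e_j)|=|{-a}+h'(e_j)|=1$, hence $a\,h'(e_j)=0$ for every $j\ge2$. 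If $a=0$ this forces $|h'(e_j)|=1$ for all $j\ge 2$, impossible for a bounded functional because $(e_j)$ is weakly null (the basis of $X_{\mathcal{S}_n}$ is shrinking, $X_{\mathcal{S}_n}$ being polyhedral and hence containing no $\ell_1$). Therefore $U^*e_1^*=\pm e_1^*$, which yields $(Ue_1)(1)=\pm1$ and $U(\ker e_1^*)=\ker e_1^*$, so $U$ restricts to a surjective isometry of $X'$. Finally, writing $Ue_1=\pm e_1+w$ with $w\in X'$: for each $v\in Ba(X')$ pick $x'\in Ba(X')$ with $Ux'=v$, so that $\|{\pm}e_1+w+v\|=\|U(e_1+x')\|=\|e_1+x'\|=1$, whence $\|w+v\|\le1$; taking $v=w/\|w\|$ (if $w\neq0$) gives the contradiction $\|w\|+1\le1$. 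Thus $Ue_1=\pm e_1$.

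\textbf{Item (ii).} Fix $j\ge2$. From $U^*e_1^*=\pm e_1^*$ we get $x_j(1)=(U^*e_1^*)(e_j)=0$. Since $e_1+e_j\in E(X_{\mathcal{S}_n})$, the extreme point $U(e_1+e_j)=\pm e_1+x_j$ lies in $c_{00}$ by Proposition \ref{lots of items}, so $x_j\in c_{00}$. By the forward implication of Theorem \ref{only reflexive}, $\mathcal{A}_{\pm e_1+x_j}$ contains a non-maximal set; because $x_j(1)=0$ one has $\mathcal{A}_{\pm e_1+x_j}=\{\{1\}\}\cup\mathcal{A}_{x_j}$ with $\{1\}$ maximal, so $\mathcal{A}_{x_j}$ contains a non-maximal set $F$. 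Letting $G\subseteq F$ be the $1$-set of $x_j$ on which $x_j$ is fully supported (with $x_j$ vanishing on $F\setminus G$), $G$ must itself be non-maximal — were $G$ maximal, the non-maximality of $F\supseteq G$ would yield proper $\mathcal{S}_n$-extensions of $G$ — so $x_j$ has a non-maximal $1$-set.

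\textbf{Item (iii).} Put $M=\max\{\max\supp x_i:1\le i\le m\}$, finite by (i)--(ii), and take $j>M$ (so $j\ge2$). Let $P$ be the coordinate projection onto $\{1,\dots,m\}$ and $Q=I-P$. Then $U(Py_j)=\sum_{i\le m}y_j(i)x_i$ is supported in $\{1,\dots,M\}$, while $U(Qy_j)=e_j-U(Py_j)$ has norm $\le\|y_j\|=1$ by $1$-unconditionality of $(e_i)$. Since $j>M$, pairing $e_j-U(Py_j)$ with sets $F\in\mathcal{S}_n$ containing $j$ and using $\{k,j\}\in\mathcal{S}_n$ for $2\le k<j$ forces $U(Py_j)$ to vanish off $\{1\}$; hence $U(Py_j)=c\,e_1$ for a scalar $c$, and $Py_j=c\,U^{-1}e_1=\pm c\,e_1$ by (i) applied to $U^{-1}$. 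But $(Py_j)(1)=y_j(1)=0$ by (ii) applied to $U^{-1}$, so $c=0$, $Py_j=0$, i.e.\ $\min\supp y_j>m$.

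\textbf{Expected obstacle.} The hard part is (i): isolating the intrinsic feature of $\pm e_1$ (that $e_1^*$ takes value $\pm1$ on every extreme point, together with $\pm e_1$ being $\oplus_\infty$-summand directions) and then upgrading $(Ue_1)(1)=\pm1$ to $Ue_1=\pm e_1$. Once (i) is available, (ii) and (iii) are essentially mechanical, the only care being the (legitimate) use of (i)--(ii) for $U^{-1}$.
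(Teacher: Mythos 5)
Your proposal is correct, but for items (i) and (iii) it takes a genuinely different route from the paper; (ii) is essentially the paper's argument. For (i), the paper never passes to the dual: it characterizes the hyperplane $X_1=\{x: x(1)=0\}$ intrinsically (a closed codimension-one subspace $X$ admitting a norm-one vector $e$ with $\|e+x\|=1$ for every $x\in Ba(X)$ must equal $X_1$, and necessarily $e=\pm e_1$), applies this to $U(X_1)$ with $e=Ue_1$, and reads $Ue_1=\pm e_1$ straight off the characterization. You instead show every extreme point has first coordinate $\pm1$ (via the forward implication of Theorem \ref{only reflexive} at $p=1$ and the fact that $\{1\}$ is the only member of $\mathcal{S}_n$ containing $1$), that $\pm e_1+e_j$ are extreme, deduce $U^*e_1^*=\pm e_1^*$, and then upgrade through the decomposition $X_{\mathcal{S}_n}=\mathbb{R}e_1\oplus_\infty X_1$; this works, but it imports extra machinery to rule out the case $a=0$, namely weak nullity of $(e_j)$ via shrinkingness (James plus Fonf/polyhedrality) — one could get the same weak nullity more internally from Proposition \ref{inf convex} — whereas the paper's hyperplane argument is purely metric and self-contained. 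For (iii), the paper argues by induction on $m$, using the non-maximal $1$-set of $x_m$ provided by (ii) to force $\|x_m\pm e_j\|=2$ and then pinning the $m$-th coordinate of $y_j$; your argument is non-inductive: $U(Py_j)$ is supported in $\{1,\dots,M\}$, the bound $\|e_j-U(Py_j)\|\leqslant 1$ together with the sets $\{k,j\}\in\mathcal{S}_n$ for $2\leqslant k\leqslant M$ forces $U(Py_j)\in\mathbb{R}e_1$, and then (i)--(ii) applied to $U^{-1}$ (legitimate, as you note, since the proofs use only that $U$ is a surjective isometry; in fact $U^{-1}e_1=\pm e_1$ already follows from (i) for $U$) annihilate the remaining coefficient. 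This is arguably cleaner: it avoids the induction and does not even use the non-maximal-$1$-set portion of (ii), which the paper's proof of (iii) does need, at the harmless cost of invoking that coordinate projections are contractive, which is immediate from the definition of the norm. The only points worth tightening are routine: the passage in (ii) from a non-maximal set of $\mathcal{A}_{x_j}$ to a non-maximal $1$-set tacitly uses the standard fact (also implicit in the paper's stated equivalence for non-maximality) that for Schreier families membership of $G\cup\{l\}$, $l>\max G$, does not depend on $l$.
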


\begin{proof}
Let $X_1$ be the subspace of $X_{\mathcal{S}_n}$ of all vectors having $0$ in the first coordinate. It suffices to show that $U(X_1) = X_1$. Note the following characterization of $X_1$: A subspace $X$ of $X_{\mathcal{S}_n}$ is equal to $X_1$ if and only if $X$ is closed with codimension $1$ and there is a norm-one vector $e \in X_{\mathcal{S}_n}$ so that for each $x \in Ba(X)$, $\|e + x\|=1$.

Let us first see that this characterization holds. The forward direction is trivial using $e=e_1$. For the reverse implication, we assume first that the given vector $e$ has the property there is a $j \in \supp \,e$ with $j \geqslant 2$. Since $X$ has codimension $1$ there is a $k >j$ so that $e_k \in X$. Then since $\{j,k\} \in \mathcal{S}_n$ 
$$1=\|e +e_k \|\geqslant |e(j)|+1$$
which is a contradiction. Therefore $\supp\, e = \{1\}$ and thus $e=\pm e_1$. Consequently, $X=X_1$ 

To show that $U(X_1)=X_1$, it therefore suffices to find the appropriate vector `$e$'.  Let $x \in Ba(X_1)$ and note that 
$$ 1 =\|e_1 + x \|= \|Ue_1 + Ux \|$$
Therefore $Ue_1$ is the required vector `$e$' and, consequently $U(X_1)=X_1$,

We now prove item (ii).  Let $j\geqslant 2$. It is easy to see that $e_1 + e_j \in E(X_{\mathcal{S}_n})$. Therefore $U(e_1 + e_j) = \varepsilon_1 e_1 + x_j \in E(X_{\mathcal{S}_n})$ for some $\varepsilon_1 \in \{-1,1\}$. Using Proposition \ref{lots of items} (3), $\varepsilon_1 e_1 + x_j \in c_{00}$ and thus $x_j \in c_{00}$. 

Since $U$ is an isometry $\|\varepsilon_1 e_1 \pm x_j\|=1$. Then $1 \geqslant |\varepsilon_1+ x_j(1)|$ and $1 \geqslant |\varepsilon_1- x_j(1)|$. 
This can only be in the case if $x_j(1)=0$.

In addition, using Proposition \ref{only reflexive}, $\varepsilon_1 e_1 + x_j$ has a non maximal 1-set $F$ and clearly $1 \not\in F$. Therefore $F\subset \supp \,x_j$ and so is a non-maximal 1-set for $x_j$. This concludes the proof of item (ii).  

Proof of item (iii): We will proceed by induction on $m$. For the base case, using (i) we fix $j>1$. Since $U$ is an isometry, $1=\|\varepsilon_1 e_1 \pm e_j\|=\| e_1 \pm y_j\|$. 
This implies that $1 \geqslant |\varepsilon_1 + y_j(1)|$ and $1 \geqslant |\varepsilon_1 - y_j(1)|$. These cannot simultaneously be true unless $y_j(1)=0$, as desired for the base case.

Let $m \in \mathbb{N}$ and $m \geqslant 2$ and assume that the conclusion holds for all $m'<m$. Fix $j_m>\max\{\max \supp \,x_i: 1 \leqslant i \leqslant m\}$. By the induction hypothesis we know that $\min\supp\,y_{j_m}>m-1$. Therefore it suffices to prove that $y_{j_m}(m)=0$. First note that by item (iii), $x_m$ has a non-maximal 1-set $F$. Therefore $F \cup \{j_m\} \in \mathcal{S}_n$ and so $2=\|x_m \pm e_{j_m}\|$. Therefore $\|e_m \pm y_{j_m}\|=2$. Let $F^+ \in \mathcal{S}_n$ with $\sum_{i \in F^+}|(e_m + y_{j_m})(i)|=2$ and $F^- \in \mathcal{S}_n$ with $\sum_{i \in F^-}|(e_m - y_{j_m})(i)|=2$. Since the norm of both of these vectors is $1$ we know that $m \in F^+\cap F^-$. Therefore
$$2 = |1 + y_{j_m}(m)| + \sum_{i \in F^+}|y_{j_m}(i)|,$$
$$2 = |1 - y_{j_m}(m)| + \sum_{i \in F^-}|y_{j_m}(i)|.$$
If $y_{j_m}(m)\not=0$ then either $|1 + y_{j_m}(m)|$ or $|1 - y_{j_m}(m)|$ is strictly less than $1$. Therefore either $\sum_{i \in F^+}|y_{j_m}(i)|$ or $\sum_{i \in F^-}|y_{j_m}(i)|$ is strictly greater than $1$, which contradicts the fact that $\|y_{j_m}\|=1$. Therefore $\min \supp \, y_{j_m} >m$, as desired.
\end{proof}

For $x,y \in c_{00}$ we write $x<y$ if $\max \supp\,x < \min\supp\,y$ and $k<x$ if $k \leqslant \min \supp \,x$. If $F \subset \mathbb{N}$ we will say that $(z_i)_{i \in F}$ is a block sequence if for $i<j$ in $F$ $z_i < z_j$.

\begin{corollary}
For each $m \in \mathbb{N}$ there is an $d\in \mathbb{N}$ and $m < y_d$ and $k \in \mathbb{N}$ with $y_d <y_k$.
\label{makeblocks}
\end{corollary}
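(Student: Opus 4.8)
The plan is to leverage Lemma \ref{isometry items} (iii) twice in succession. Recall that $U$ maps the standard basis onto $(x_i)$ and that $y_i = U^{-1}(e_i)$. Since each $x_i$ is finitely supported (item (ii) of the lemma, together with $Ue_1 = \pm e_1$), for any given $m$ the quantity $M_m := \max\{\max\supp\,x_i : 1 \leqslant i \leqslant m\}$ is a well-defined natural number.

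First I would apply Lemma \ref{isometry items} (iii) with the integer $m$: choosing any $d > M_m$, we get $\min\supp\,y_d > m$, i.e.\ $m < y_d$. Next, I want to find $k$ with $y_d < y_k$, which means $\min\supp\,y_k > \max\supp\,y_d$. To arrange this, set $m' := \max\supp\,y_d$ (a finite number, since $y_d \in c_{00}$ — again this uses that the $x_i$ are finitely supported, so $e_d = U y_d$ forces... actually more directly, $y_d$ is a finite linear combination of the $x_i^{-1}$-images; but the cleanest justification is that $\min\supp\,y_d > m$ only gives a lower bound, so one should note separately that each $y_d$ is finitely supported). Granting $y_d \in c_{00}$, apply Lemma \ref{isometry items} (iii) once more, now with the integer $m'$: choosing any $k > M_{m'}$ yields $\min\supp\,y_k > m' = \max\supp\,y_d$, that is, $y_d < y_k$. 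This produces the required $d$ and $k$.

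The one point that needs care — and which I expect to be the only real obstacle — is verifying that $y_d \in c_{00}$, so that $\max\supp\,y_d$ makes sense and the second application of the lemma is legitimate. This should follow because $U^{-1} = U^*$-type reasoning does not immediately apply, but rather: $U^{-1}$ is also an onto linear isometry of $X_{\mathcal{S}_n}$, so by Lemma \ref{isometry items} (ii) applied to $U^{-1}$ in place of $U$ (noting $y_j = U^{-1}e_j$ plays the role that $x_j = Ue_j$ plays there), we get $y_j \in c_{00}$ for all $j \geqslant 2$, and $y_1 = \pm e_1 \in c_{00}$ by part (i) applied to $U^{-1}$. Alternatively one can cite that $U^{-1}(e_d)$ is the image under an isometry of the extreme point $e_1 + e_d$ (for $d \geqslant 2$) minus $U^{-1}(e_1)$, hence lies in $c_{00}$ by Proposition \ref{lots of items}(3). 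With $y_d \in c_{00}$ established, the corollary follows directly from the two invocations of Lemma \ref{isometry items} (iii) as described above.
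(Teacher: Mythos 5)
Your proof is correct and follows essentially the same route as the paper: two successive applications of Lemma \ref{isometry items}(iii), first with $m$ and then with $\max\supp\,y_d$. Your extra care in justifying $y_d\in c_{00}$ (by applying Lemma \ref{isometry items}(i)--(ii) to the onto isometry $U^{-1}$) is a valid way to fill a step the paper leaves implicit.
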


\begin{proof}
Fix $m \in \mathbb{N}$. Using Lemma \ref{isometry items} (iii) we can find $d$ sufficiently large so that $m<y_d$. Applying Lemma \ref{isometry items} (iii) for $\max \supp \,y_d$ we can find $k$ with $y_d < y_k$.
\end{proof}

\begin{proof}[Proof of Theorem \ref{Isom}]
 Fix $k \in \mathbb{N}$. We will prove that $x_k = \pm e_k$. The proof proceeds by induction. The case $k=1$ follows from Lemma \ref{isometry items}(i). Now fix a $k \geqslant 2$ and assume the claim holds for all $i <k$.
By repeated applications of Corollary \ref{makeblocks} we can find a set $F_1 \subset \mathbb{N}$ so that $k < F_1$, $|F_1|=k$, and a block sequence $(y_i)_{i\in F_1}$ with $k < \sum_{i\in F_1}y_i=:z_1$. For notational reasons let $k_0=k$. 

Let $k_1 = \max\supp\, z_1$. Find $F_2 \subset \mathbb{N}$ so that $|F_2|=k_1$, and a block sequence $(y_i)_{i \in F_2}$ with $k_1 < \sum_{i \in F_2} y_i =:z_2$. 

Continuing in this way we can construct and increasing sequence $(k_i)_{i=0}^\infty$ so that for each $i$
$$z_{i+1}=\sum_{j \in F_{i+1}} y_j > k_i$$
with $|F_{i+1}|=k_i$, $k_i < F_{i+1}$ and a block sequence $(y_j)_{j \in F_{i+1}}$.

There is a unique $d(n-1) \in \mathbb{N}\cup\{0\}$ so that  $(k_i)_{i=0}^{d(n-1)} \in \mathcal{S}^{MAX}_{n-1}$ (clearly,  $d(0)=0$ and $d(1)=k_0-1$).
Consider the following two remarks.
\begin{remark}
Let $j > k_0$ and $F:= \cup_{i=1}^{d(n-1)+1}F_i$. We claim that
\begin{equation}
 \{j\}\cup F \in \mathcal{S}_{n}.  
 \label{in there}
\end{equation}
Our tool is Remark \ref{squeeze in}. Let $G_i = \{k_{i},\ldots,2k_{i}-1\}$ for $i \in \mathbb{N}\cup\{0\}$. Then $G_0<G_{1}< \cdots <G_{d(n-1)}$ are in $\mathcal{S}_{1}^{MAX}$ and $G:=\cup_{i=0}^{d(n-1)}G_i \in \mathcal{S}^{MAX}_{n}$ by the definition of $d(n-1)$.

Note that $|F_i|=|G_{i-1}|=k_{i-1}$ (i.e. $|F|=|G|$), $F$ is a spread of $G$, and $\min G=k_0 <\min F$. 
 Therefore we can apply Remark \ref{squeeze in} to conclude that (\ref{in there}) holds.
 \label{too big}
\end{remark}

\begin{remark}
Suppose $G \in \mathcal{S}_n^{MAX}$ has the property that there are sets $G_0< \cdots <G_m$ are in $\mathcal{S}_1^{MAX}$ such that $\min G_i \leqslant k_i$ with $G=\cup_{i=0}^m G_i$. Then $m \leqslant d(n-1)$. Indeed suppose $m>d(n-1)$. Since $(k_i)_{i=0}^{d(n-1)} \in \mathcal{S}_{n-1}^{MAX}$ we know that    $(k_i)_{i=0}^m \not \in \mathcal{S}_{n-1}$. Since $\min G_i \leqslant k_i$ we can conclude that $(\min G_i)_{i=0}^m \not \in \mathcal{S}_{n-1}$. Therefore using Remark \ref{split em up} item 2, we conclude that $G\not \in \mathcal{S}_n^{MAX}$.
\label{too small}
\end{remark}



Note that by definition 
$$U(e_k + \sum_{i=1}^{d(n-1)+1} \sum_{j \in F_i} y_j)= x_k + \sum_{i=1}^{d(n-1)+1} \sum_{j\in F_i} e_j.$$
We will show that if $\max\supp\, x_k \geqslant k+1$ then we have the contradiction:
\begin{enumerate}
    \item $\displaystyle \|x_k + \sum_{i=1}^{d(n-1)+1} \sum_{j\in F_i} e_j\| > \sum_{i =1}^{d(n-1)+1} |F_i| $
    \item $\displaystyle \|e_k + \sum_{i=1}^{d(n-1)+1} \sum_{j \in F_{i}} y_j\| \leqslant \sum_{i=1}^{d(n-1)+1} |F_i| $
\end{enumerate}
First we will prove item $(1)$.

Let $j \in \supp \,x_k$ with $j \geqslant k+1$. Using Remark \ref{too big},
$$F=\{j\}\cup\bigcup_{i=1}^{d(n-1)+1} F_i \in \mathcal{S}_n.$$ 
We may therefore conclude that
$$ \|x_k + \sum_{i=1}^{d+1} \sum_{j\in F_i} e_j\| \geqslant |x_k(j)| + \sum_{i=1}^{d(n-1)+1}|F_i|.$$
This prove the first item.

We will now prove the second item. Fix a $G \in \mathcal{S}^{MAX}_n$ (we may assume without loss of generality that $G$ is maximal).
Then $G=\cup_{i=0}^m G_i$ where $G_0< \cdots< G_m$ are in $\mathcal{S}_1^{MAX}$ and $(\min G_i)_{i=0}^m \in \mathcal{S}_{n-1}^{MAX}$. 

First note that if either $k_0 \not\in G$ or $G \cap \supp\, y_j =\emptyset$ for some $j \in \cup_{i=1}^{d(n-1)+1}F_i$ the desired upper bound follows from counting the vectors whose intersection is non-empty. Note that in total there are $1 + \sum_{i=1}^{d(n-1)+1} |F_i|$ many vectors and so missing any single vector (which, notably, have norm 1) yields the desired upper bound. 

 Therefore we may assume that 
 \begin{equation}
     k_0 \in G\mbox{ and }G \cap \supp\, y_j \not=\emptyset\mbox{ for all }j \in \bigcup_{i=1}^{d(n-1)+1}F_i. \label{they intersect}
 \end{equation}
Therefore $k_0 \in G$ and, in particular, $\min G_0 \leqslant k_0$. Since $G_0 \in \mathcal{S}_1^{MAX}$, $k_0<F_1$ and $|F_1|=k_0$, $G_0\cap \supp\,y_{\max F_1}=\emptyset$. Consequently, $\min G_1 \leqslant \max\supp\,y_{\max F_1}=k_1$. Continuing in this manner we see that $\min G_i \leqslant k_i$ and $G_i\cap \supp\,y_{\max F_{i+1}}=\emptyset$ for each $0\leqslant i \leqslant m$. Therefore by Remark \ref{too small} we may conclude that $m \leqslant d(n-1)$. However,
$$G_{m}\cap \supp\,y_{\max F_{m+1}}=\emptyset $$
and $m \leqslant d(n-1)$ contradicts (\ref{they intersect}) and yields the desired upper bound.

 Therefore we can conclude, as desired, that $\max \supp\, x_k \leqslant k$. By induction we know that $Ue_{j}=\varepsilon_j e_{j}$ for each $j<k$. If $k=2$ we have from  Lemma \ref{isometry items}(i) that $x_k(1)=0$ and thus $x_k=\pm e_k$. Suppose $k\geqslant 3$ and let $j<k$. If $j=1$, $x_k(j)=0$ by Lemma \ref{isometry items}(ii). Suppose then that $1<j<k$. Then
 $$2=\|e_j \pm e_k\|=\|\varepsilon_j e_j \pm x_k\|$$
 Since $Ue_j=\varepsilon_j e_j$. Arguing as in the proof of Lemma \ref{isometry items}(iii), we know that if $\sum_{i \in F^+}|(\varepsilon_je_j + x_k)(i)|=2$ for $F^+ \in \mathcal{S}_n$ then $j \in F^+$ and if $\sum_{i \in F^-}|(\varepsilon_je_j - x_k)(i)|=2$ for $F^- \in \mathcal{S}_n$ then $j \in F^-$. Therefore
 $$2=|\varepsilon_j + x_k(j)| +\sum_{i \in F^+, i\not= j}|x_k(i)|,$$
 $$2=|\varepsilon_j - x_k(j)| +\sum_{i \in F^-, i\not= j}|x_k(i)|.$$
 Consequently, if $x_k(j)\not=0$ we can see that either $\sum_{\{i \in F^+, i\not= j\}}|x_k(i)|$ or $\sum_{\{i \in F^+, i\not= j\}}|x_k(i)|$ is strictly greater than $1$. This contradicts the fact that $\|x_k\|\leqslant 1$.
 
 Whence $\supp \, x_k =\{k\}$. Since $x_k$ is a norm one vector $x_k = \pm e_k$ which is the desired result. 
 \end{proof}

Using the characterization of the $\Isom(X_{\mathcal{S}_n})$ given by Theorem \ref{Isom} we will provide a new example of a light group of isometries of a Banach space without the PCP.

\begin{proposition}
\label{light}
Let $n \in \mathbb{N}$. The isometry group $\Isom(X_{\mathcal{S}_n})$ is light.
\end{proposition}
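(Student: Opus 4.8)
The goal is to show that on $G = \Isom(X_{\mathcal{S}_n})$ the weak operator topology (WOT) and the strong operator topology (SOT) coincide. Since SOT is always finer than WOT, I only need to check that a WOT-convergent net in $G$ is SOT-convergent. By Theorem \ref{Isom}, every $U \in G$ acts diagonally on the basis: $Ue_i = \varepsilon_i(U)\, e_i$ for some sign $\varepsilon_i(U) \in \{-1,1\}$. Thus $G$ embeds, as a set, into the compact group $\{-1,1\}^{\mathbb{N}}$ via $U \mapsto (\varepsilon_i(U))_{i}$; conversely every such sign sequence does define an isometry of $X_{\mathcal{S}_n}$ (the norm is $1$-unconditional), so in fact $G \cong \{-1,1\}^{\mathbb{N}}$ as a group. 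The plan is to show that on this copy of $\{-1,1\}^{\mathbb{N}}$, WOT and SOT both coincide with the product topology.

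First I would record the diagonal form and observe the following elementary identity: for $U, V \in G$ with signs $\varepsilon_i(U), \varepsilon_i(V)$,
\[
\|Ue_i - Ve_i\|_{X_{\mathcal{S}_n}} = |\varepsilon_i(U) - \varepsilon_i(V)| = 2\cdot \mathbf{1}[\varepsilon_i(U) \neq \varepsilon_i(V)],
\]
since $\|e_i\| = 1$. Next, take a net $(U_\lambda)$ in $G$ with $U_\lambda \to U$ in WOT. Testing against the coordinate functionals $e_i^*$ (which are norm-one elements of $X_{\mathcal{S}_n}^*$) gives $e_i^*(U_\lambda e_i) = \varepsilon_i(U_\lambda) \to \varepsilon_i(U) = e_i^*(Ue_i)$; since the values lie in the discrete set $\{-1,1\}$, this means $\varepsilon_i(U_\lambda)$ is eventually equal to $\varepsilon_i(U)$, for each fixed $i$. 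Hence $U_\lambda e_i \to Ue_i$ in norm for every basis vector $e_i$. Because $(e_i)$ is a Schauder basis and the net $(U_\lambda)$ is uniformly bounded (all isometries, norm $1$), a standard $3\varepsilon$-argument upgrades pointwise-on-a-bounded-total-set convergence to convergence at every $x \in X_{\mathcal{S}_n}$: given $x$ and $\delta > 0$, pick $N$ with $\|x - \sum_{i \le N} x(i) e_i\| < \delta$, then for $\lambda$ large enough that $\varepsilon_i(U_\lambda) = \varepsilon_i(U)$ for all $i \le N$ we get $U_\lambda(\sum_{i\le N} x(i)e_i) = U(\sum_{i \le N} x(i) e_i)$, and the tail contributes at most $2\delta$ by the uniform bound. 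So $U_\lambda \to U$ in SOT, which is exactly lightness.

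I do not expect a serious obstacle here; the content is entirely carried by Theorem \ref{Isom}, and the remainder is the routine fact that for a uniformly bounded net of operators, convergence on the span of a Schauder basis (equivalently, WOT convergence tested against the biorthogonal functionals, which here forces exact eventual agreement because the orbit of each $e_i$ is discrete) implies SOT convergence. The one point worth stating carefully is that $\Isom(X_{\mathcal{S}_n})$ really is all of $\{-1,1\}^{\mathbb{N}}$ (both inclusions), so that the argument is not vacuous and the group-topological statement is clean; this is immediate from Theorem \ref{Isom} together with $1$-unconditionality of the basis. One could alternatively phrase the conclusion by noting $\Isom(X_{\mathcal{S}_n})$ is topologically isomorphic to the compact group $\{-1,1\}^{\mathbb{N}}$ in either topology, but the direct net argument above suffices and is self-contained.
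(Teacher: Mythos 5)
Your proposal is correct and rests on the same ingredients as the paper's proof: the diagonal form $Ue_i=\pm e_i$ from Theorem \ref{Isom}, eventual agreement of the signs on finitely many coordinates forced by WOT convergence against the coordinate functionals, and density of the finitely supported vectors to control the tail. The only difference is presentational: the paper argues by contradiction for a net converging WOT to the identity, while you give a direct $3\varepsilon$ argument for an arbitrary WOT limit in the group, which is essentially the same approach.
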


\begin{proof}
Let $(T_\alpha)_{\alpha \in I}$ be a net in $\Isom(X_{\mathcal{S}_n})$ such that $T_\alpha \stackrel{\text{WOT}}{\longrightarrow} \Id$ and suppose, by contradiction, that $T_\alpha \stackrel{\text{SOT}}{\centernot \longrightarrow} \Id$. Then, there exist $x \in X_{\mathcal{S}_n}$, $\delta > 0$ and indices $\alpha_1, \alpha_2, \dots \in I$ such that $\|T_{\alpha_\ell}x - x\| > \delta,$ for every $\ell \in \nn$. By Theorem \ref{Isom}, for each $\ell \in \nn$ there exists a sequence $(\varepsilon^{\alpha_\ell}_1,\varepsilon^{\alpha_\ell}_2, \dots)$ in $\{-1,1\}$ such that $T_{\alpha_\ell}e_i=\varepsilon^{\alpha_\ell}_i e_i$ for each $i \in \mathbb{N}.$


Since $T_\alpha \stackrel{\text{WOT}}{\longrightarrow} \Id$, for every $m \in \nn$, $e^*_m(T_{\alpha_\ell}x) \stackrel{\ell \to \infty}{\longrightarrow} x(m)$. Hence, for every $m \in \nn$, there exists $N \in \nn$ such that if $n \geqslant N$, then $(T_{\alpha_\ell}x)(k) = x(k)$, for every $1 \leqslant k \leqslant m$. On the other hand, since $\|T_{\alpha_\ell}x - x \| > \delta$, for every $m \in \nn$ there exists $F_m \in \mathcal{S}_n$ with $\supp(F_m) > m$ such that
$\sum_{k \in F_m} |(T_{\alpha_\ell}x)(k) - x(k)| = \sum_{k \in F_m} 2|x(k)| > \dfrac{\delta}{2}$. Hence, $x$ cannot be approximated by elements of $c_{00}$ with respect to the norm of $X_{\mathcal{S}_n}$, which is a contradiction.
\end{proof}

\def\cprime{$'$} \def\cprime{$'$} \def\cprime{$'$} \def\cprime{$'$}


\begin{thebibliography}{10}

\bibitem{AlbK-book}
F.~Albiac and N.~J. Kalton.
\newblock {\em Topics in {B}anach space theory}, volume 233 of {\em Graduate
  Texts in Mathematics}.
\newblock Springer, New York, 2006.

\bibitem{AlA-Dissertationes}
D.~E. Alspach and S.~A. Argyros.
\newblock Complexity of weakly null sequences.
\newblock {\em Dissertationes Math. (Rozprawy Mat.)}, 321:44, 1992.

\bibitem{AFGR-light}
L.~{Antunes}, V.~{Ferenczi}, S.~{Grivaux}, and C.~{Rosendal}.
\newblock {Light groups of isomorphisms of Banach spaces and invariant LUR
  renormings}.
\newblock {\em arXiv e-prints}, page arXiv:1711.03482, Nov. 2017.

\bibitem{ATol-Memoirs}
S.~A. Argyros and A.~Tolias.
\newblock Methods in the theory of hereditarily indecomposable {B}anach spaces.
\newblock {\em Mem. Amer. Math. Soc.}, 170(806):vi+114, 2004.

\bibitem{AronLoh-Pacific}
R.~M. Aron and R.~H. Lohman.
\newblock A geometric function determined by extreme points of the unit ball of
  a normed space.
\newblock {\em Pacific J. Math.}, 127(2):209--231, 1987.

\bibitem{AronLohSu-PAMS}
R.~M. Aron, R.~H. Lohman, and A.~Su\'arez.
\newblock Rotundity, the {C}.{S}.{R}.{P}., and the {$\lambda$}-property in
  {B}anach spaces.
\newblock {\em Proc. Amer. Math. Soc.}, 111(1):151--155, 1991.

\bibitem{Ba-book}
S.~Banach.
\newblock {\em Th\'{e}orie des op\'{e}rations lin\'{e}aires}.
\newblock \'{E}ditions Jacques Gabay, Sceaux, 1993.
\newblock Reprint of the 1932 original.

\bibitem{BDHQ-preprint}
K.~Beanland, N.~Duncan, M.~Holt, and J.~Quigley.
\newblock Extreme points for combinatorial {B}anach spaces.
\newblock preprint.

\bibitem{Authors-Lambda1}
V.~I. Bogachev, J.~F. Mena-Jurado, and J.~C. Navarro~Pascual.
\newblock Extreme points in spaces of continuous functions.
\newblock {\em Proc. Amer. Math. Soc.}, 123(4):1061--1067, 1995.

\bibitem{CastGon-Extracta}
J.~M.~F. Castillo and M.~Gonz\'alez.
\newblock An approach to {S}chreier's space.
\newblock {\em Extracta Math.}, 6(2-3):166--169, 1991.

\bibitem{Ca-Studia}
R.~M. Causey.
\newblock Concerning the {S}zlenk index.
\newblock {\em Studia Math.}, 236(3):201--244, 2017.

\bibitem{Authors-Lambda2}
J.~Daughtry and B.~Weinstock.
\newblock Extreme points of certain {B}anach spaces related to conditional
  expectations.
\newblock {\em Glasg. Math. J.}, 46(1):29--36, 2004.

\bibitem{DeB-Israel}
C.~A. De~Bernardi.
\newblock Extreme points in polyhedral {B}anach spaces.
\newblock {\em Israel J. Math.}, 220(2):547--557, 2017.

\bibitem{fleming-isometries}
R.~J. Fleming and J.~E. Jamison.
\newblock {\em Isometries on {B}anach spaces: function spaces}, volume 129 of
  {\em Chapman \& Hall/CRC Monographs and Surveys in Pure and Applied
  Mathematics}.
\newblock Chapman \& Hall/CRC, Boca Raton, FL, 2003.

\bibitem{Fonf-polyhedral}
V.~P. Fonf.
\newblock Polyhedral {B}anach spaces.
\newblock {\em Mat. Zametki}, 30(4):627--634, 638, 1981.

\bibitem{FonfVes-poly}
V.~P. Fonf and L.~Vesel\'y.
\newblock Infinite-dimensional polyhedrality.
\newblock {\em Canad. J. Math.}, 56(3):472--494, 2004.

\bibitem{GasLeung-Studia}
I.~Gasparis and D.~H. Leung.
\newblock On the complemented subspaces of the {S}chreier spaces.
\newblock {\em Studia Math.}, 141(3):273--300, 2000.

\bibitem{Go-blog}
W.~Gowers.
\newblock Must an explicitly defined {B}anach space contain $c_0$ or
  $\ell_p$?, Feb. 17, 2009.
\newblock Gowers's Weblog: Mathematics related discussions.

\bibitem{Authors-Lambda5}
A.~S. Granero.
\newblock {$\lambda$}-property in {O}rlicz spaces.
\newblock {\em Bull. Polish Acad. Sci. Math.}, 37(7-12):421--431 (1990), 1989.

\bibitem{Klee-Acta}
V.~Klee.
\newblock Polyhedral sections of convex bodies.
\newblock {\em Acta Math.}, 103:243--267, 1960.

\bibitem{Lin-Lambda}
P.-K. Lin.
\newblock Extreme points of {B}anach lattices related to conditional
  expectations.
\newblock {\em J. Math. Anal. Appl.}, 312(1):138--147, 2005.

\bibitem{Lin-Klee}
J.~Lindenstrauss.
\newblock Notes on {K}lee's paper: ``{P}olyhedral sections of convex bodies''.
\newblock {\em Israel J. Math.}, 4:235--242, 1966.

\bibitem{megrelishvili-light}
M.~G. Megrelishvili.
\newblock Operator topologies and reflexive representability.
\newblock In {\em Nuclear groups and {L}ie groups ({M}adrid, 1999)}, volume~24
  of {\em Res. Exp. Math.}, pages 197--208. Heldermann, Lemgo, 2001.

\bibitem{Authors-Lambda4}
J.~C. Navarro-Pascual and M.~G. Sanchez-Lirola.
\newblock Diameter, extreme points and topology.
\newblock {\em Studia Math.}, 191(3):203--209, 2009.

\bibitem{Rosenthal-Handbook}
H.~P. Rosenthal.
\newblock The {B}anach spaces {$C(K)$}.
\newblock In {\em Handbook of the geometry of {B}anach spaces, {V}ol. 2}, pages
  1547--1602. North-Holland, Amsterdam, 2003.

\bibitem{ShTr-Glasgow}
T.~J. Shura and D.~Trautman.
\newblock The {$\lambda$}-property in {S}chreier's space {$S$} and the
  {L}orentz space {$d(a,1)$}.
\newblock {\em Glasgow Math. J.}, 32(3):277--284, 1990.

\end{thebibliography}
\end{document}